\newcommand{\Keywords}[1]{\par\noindent{\small{\bf Keywords\/}: #1}}
\newcommand{\Class}[1]{\par\noindent{\small{\bf Mathematics Subjects Classification (2020)\/}: #1}}
\def\url@leostyle{%
 \@ifundefined{selectfont}{\def\UrlFont{\sf}}{\def\UrlFont{\scriptsize\ttfamily}}} \makeatother\urlstyle{leo}
\renewcommand\labelenumi{(\arabic*)}
\renewcommand\theenumi\labelenumi
\newtheorem{theorem}{Theorem}
\newtheorem{lemma}[theorem]{Lemma}
\theoremstyle{definition}
\newtheorem{example}[theorem]{Example}
\theoremstyle{remark}
\newtheorem{remark}[theorem]{Remark}
\numberwithin{equation}{section}
\numberwithin{theorem}{section}
\definecolor{Red}{rgb}{0.9,0,0.0}
\definecolor{Blue}{rgb}{0,0.0,1.0}
\def\si#1{}
\def\ni#1{}
\def\I{\mathds{1}}
\DeclareMathAlphabet\mathbfcal{OMS}{cmsy}{b}{n}
\def\mypart#1{}
\DeclareFontFamily{U}{mathx}{\hyphenchar\font45}
\DeclareFontShape{U}{mathx}{m}{n}{
	<5> <6> <7> <8> <9> <10>
	<10.95> <12> <14.4> <17.28> <20.74> <24.88>
	mathx10
}{}
\DeclareSymbolFont{mathx}{U}{mathx}{m}{n}
\DeclareMathSymbol{\bigtimes}{1}{mathx}{"91}
\DeclareMathAccent{\widebar}{0}{mathx}{"73}
\def\cB{\mathcal{B}}
\def\cF{\mathcal{F}}
\def\cX{\mathcal{X}}
\def\bE{\mathbb{E}}
\def\bP{\mathbb{P}}
\def\bQ{\mathbb{Q}}
\def\bR{\mathbb{R}}
\newcommand{\1}{\mathbbm{1}}            
\newcommand{\set}[1]{{\{#1\}}}            
\newcommand{\wt}[1]{\widetilde{#1}}
\def\tilde{\widetilde}
\newtheorem{thm}{Theorem}[section]
\newtheorem{lem}{Lemma}[section]
\newtheorem{pro}{Proposition}[section]
\newtheorem{cor}{Corollary}[section]
\newtheorem{rem}{Remark}[section]
\newtheorem{ex}{Example}[section]
\newtheorem{defi}{Definition}[section]
  \newcommand{\be}{\begin{equation}}
\newcommand{\ee}{\end{equation}}
\newcommand{\bde}{\begin{displaymath}}
\newcommand{\ede}{\end{displaymath}}
\newcommand{\beq}{\begin{eqnarray*}}
\newcommand{\eeq}{\end{eqnarray*}}
\newcommand{\beqa}{\begin{eqnarray}}
\newcommand{\eeqa}{\end{eqnarray}}
\newcommand{\bel }{\left{\begin{array}{ll}}}
\newcommand{\eel}{\cr \end{array} \right.}
\newcommand{\bd}{\begin{defi}}
\newcommand{\ed}{\end{defi}}
\newcommand{\brem }{\begin{rem} \rm }
\newcommand{\erem }{\end{rem}}
\newcommand{\bex}{\begin{ex} \rm }
\newcommand{\eex}{\end{ex}}
\newcommand{\begth}{\begin{thm}}
\newcommand{\eeth}{\end{thm}}
\newcommand{\bl}{\begin{lem}}
\newcommand{\el}{\end{lem}}
\newcommand{\bp}{\begin{pro}}
\newcommand{\ep}{\end{pro}}
\newcommand{\bcor}{\begin{cor}}
\newcommand{\ecor}{\end{cor}}
\title{Functional Laws of Large Numbers for Marked Hawkes Processes and Compound Marked Hawkes Processes}
\author{Tomasz R. Bielecki
	\\ Department of Applied Mathematics \\
	Illinois Institute of Technology \\
	Chicago, IL 60616, USA \\ \\
	Jacek Jakubowski \\ University of Warsaw
	\\  Institute     of Mathematics
	\\ Warsaw, Poland \\  \\
	Mariusz Niew\c{e}g\l owski \\
	Faculty of Mathematics and Information Science
	\\ Warsaw University of Technology
	\\ Warsaw, Poland \\ \\
  Anatoliy Swishchuk\\ Department of Mathematics\\ University of Calgary\\ Calgary, Canada}
\def\and{%
  \end{tabular}%
  \begin{tabular}[t]{c}}%
\def\@fnsymbol#1{\ensuremath{\ifcase#1\or a\or b\or c\or
   d\or e\or f\or g\or h\or i\else\@ctrerr\fi}}
\date{\vskip 30 pt \today \vskip 25 pt}
\begin{document}
\maketitle
\thispagestyle{empty}	
\newpage
\begin{abstract}
We give functional laws of large numbers for a class of marked Hawkes processes and marked compound Hawkes processes with a general mark space. Our results provide some complement to those presented in e.g. \cite{BDHM} and \cite{HORST202194}. As an example we provide an application to analysis of time limit of an insurance ruin process.
\end{abstract}

\vskip 20 pt
\Keywords{marked Hawkes process, marked compound Hawkes process, functional law of large numbers, insurance ruin process}
\vskip 20 pt
\Class{Primary 60G55, 60G57,  $\,$ Secondary 97M30}



\section{Introduction}

 Functional limit theorems in the theory of stochastic processes are the results that describe the limiting behavior of families of processes as  time  goes to infinity. They are important as they generalize single-point limit theorems to describe the behavior of entire paths of processes. Here, we study a special type of functional limit theorems. Specifically, we study functional laws of large numbers for a family of marked Hawkes processes and some families of marked Hawkes processes.

The object of interest here is  a marked Hawkes process $N=((T_n, X_n))_{n \geq 1}$ with a Borel mark space $(\mathcal{X},\mathbfcal{X} )$.
We associate with the process $N$ an integer-valued random measure on $(\mathbb{R}_+ \times \mathcal{X}, \mathcal{B}(\mathbb{R}_+) \otimes \mathbfcal{X} )$, also denoted by $N$.

The main objective of this paper is to study the limit of $\frac{N((0,vT],A)}{T}$ when $T\rightarrow \infty$ for $v\geq 0$ and $A\in\mathbfcal{X}$. In this regard, the main result of the paper is Theorem \ref{LLN}.

This result relates to Theorem 1 in \cite{BDHM}, which deals with the case of a multivariate Hawkes process without simultaneous excitations. Such a process can be seen as a marked Hawkes process with discrete mark space  $\mathcal{X}$, where marks identify the univariate components of the process. So, our generalization amounts to considering a general Borel mark space. The techniques of our key technical lemmas are in the spirit of the techniques used in \cite{BDHM}. But, because our mark space is not discrete, our proofs are much more intricate.
Theorem \ref{LLN} establishes convergence in the mean square sense, whereas Theorem 1 in \cite{BDHM} provides uniform convergence in the mean square sense. Please see Remark \ref{difference} that sheds light on this difference.

Theorem \ref{LLN} also relates to Theorem 3.6 in \cite{HORST202194}. Our set-up corresponds to the set-up in  \cite{HORST202194} with, using their notation, $N_I$ being a null process and $\mu(\cdot)\equiv m.$ This, in particular, means that  Condition 3.2 in \cite{HORST202194}, assumed in  that paper to prove Theorem 3.6,  is not satisfied in our set-up, and thus  the results of  Theorem 3.6 in \cite{HORST202194} are not applicable here. In particular, it needs to be noted that the techniques used for the proof of Theorem 3.6 in \cite{HORST202194} are different in nature from the techniques the techniques employed in the present paper to prove Theorem \ref{LLN}. To give more insight into the relationship between the present work and \cite{HORST202194} and \cite{BDHM} we note that the results of Theorem 3.6 in \cite{HORST202194} are not applicable in the set-up of \cite{BDHM}. In particular, again, the techniques used for the proof of Theorem 3.6 in \cite{HORST202194} are different in nature from the techniques the techniques employed in  \cite{BDHM} to prove Theorem 1 therein. In this regards we note that in the second paragraph of Section 1.1 in  \cite{HORST202194} the authors comment on equation (1.7) in their paper, which is a consequence of equation (14)  in \cite{BDHM}. The latter equation is the key in \cite{BDHM} to prove  their functional law of large numbers. The authors of \cite{HORST202194} state that there is no obvious way to derive equivalent of equation (14) in \cite{BDHM} to the case of marked Hawkes process when the set of marks is not discrete, and therefore they invent an alternative methodology.  In the present work we actually derive a  counterpart of equation (14) in \cite{BDHM} that is  appropriate in the case of marked Hawkes process with a general, not discrete, mark space. This is done in equation \eqref{eq:ntA} below, and this allows us to generalize to the case of marked Hawkes process with general mark space the proof techniques used in \cite{BDHM}.  Finally, it also needs to be noted that in Section 3.3 of \cite{HORST202194} the authors claim that they can handle the case of constant and  strictly positive  $\mu(\cdot)$ to derive a functional law of large numbers
for processes $N(\cdot,A):=N((0,\cdot],A)$ but no convincing arguments are provided to justify this claim.
In the present work we also derive functional laws of large numbers for two types of compound marked Hawkes processes and provide examples of their applications in insurance. In particular, we show that laws of large numbers for the ruin process given in \cite{StaTor2010}, \cite{Swishchuk2018} and \cite{BJNS2024}, for example, can be derived from our results in a straightforward manner.

The paper is organized as follows: In Section 2 we introduce the main objects of study in this paper. Section 3 develops auxiliary results that are needed for the proof of Theorem 4.1. In Section 4 we state and prove the functional law of large numbers for our marked Hawkes process and for the related compound marked Hawkes processes. Finally, Section 5 provides suggestions for a follow-up research.

\section{Marked Hawkes Process and Compound Marked Hawkes Process}\label{sec:GenGHawkes}

In this section we introduce the objects that we study in this paper.

\subsection{Marked Hawkes Process }

Here we recall the concept of a marked Hawkes process. The version presented below is a special case of the generalized marked Hawkes process introduced in \cite{Bielecki2022a}.

Let $(\Omega,\cF,\bP)$  be  a probability space and $(\mathcal{X},\mathbfcal{X} )$ be a Borel space.
We take $\partial$ to be a point external to $\mathcal{X}$, and we let $\mathcal{X}^\partial := \mathcal{X} \cup \partial$. On $(\Omega,\cF,\bP)$ we consider a marked point process (MPP) $N$ with  mark space $\mathcal{X}$, that is, a sequence of random elements
\begin{equation}\label{eq:Ngen-G}
N=((T_n, X_n))_{n \geq 1},
\end{equation}
where for each $n$:
\begin{enumerate}[label=(\arabic*)]
	\item $T_n$ is a random variable with values in $(0,\infty]$,
	\item $X_n$ is a random variable  with values in $\mathcal{X}^\partial$,
	\item $
	T_n \leq T_{n+1}$, and  if $T_n < + \infty$ then  $
	T_n < T_{n+1}$,
	\item  $ X_n = \partial $ iff $T_n = \infty$.
\end{enumerate}
The explosion time of $N$, say $T_\infty$, is defined as
\[
T_\infty := \lim_{ n \rightarrow \infty } T_n.
\]
Following the typical techniques used in the theory of marked point processes (MPPs) we associate with the process $N$ an integer-valued random measure on $(\mathbb{R}_+ \times \mathcal{X}, \mathcal{B}(\mathbb{R}_+) \otimes \mathbfcal{X} )$, also denoted by $N$ and defined as
\begin{equation}\label{eq:NH-G}
N(dt,dx) := \sum_{n \geq 1 } \delta_{(T_n, X_n)} (dt, dx) \1_\set{T_n < \infty},
\end{equation}
so that
\[
N((0,t],A)=\sum_{n\geq 1} \1_{\{ T_n \leq t\}}\1_{\{X_n \in A \}},
\]
where $A\in\mathbfcal{X}$. We postulate that the corresponding Hawkes kernel is given as
\begin{equation}\label{eq:kappa-G}
\kappa(t,A)=\left (m+\int_{(0, t)\times \mathcal{X} } f(t-s,x)N(ds,dx)\right )\mathbb{Q}(A),
\end{equation}
for $t\geq 0$ and $A\in \mathbfcal{X} $, where $\mathbb{Q}$ is a probability measure on $(\mathcal{X},\mathbfcal{X} )$, $m$ is a non-negative constant, and $f(\cdot,\cdot)$ is a non-negative, bounded measurable function such that
\begin{equation}\label{f}
\int_\cX \| f(\cdot,x)\|^2_{L^1} \bQ (dx)<\infty.
\end{equation}

Proposition 2.7 in \cite{Bielecki2022a} establishes that under this set-up the sequence $X_i,\ i=1,2,\ldots$ is an i.i.d. sequence with the law of $X_1$ equal to $\bQ$. Moreover, the random variables $X_{n+1},\ n=1,2,\ldots$ are independent from  $T_i,\ i=1,2,\ldots,n$ . 

Proposition 2.5 in \cite{Bielecki2022a} gives sufficient conditions on $\kappa$ under which $T_\infty =\infty$ almost surely. In the set-up of the present paper these conditions read that
\begin{itemize}
\item there exist real-valued, continuous and non-negative functions $\beta$ and $\gamma$ such that for $t\geq 0$ we have $f(t-s,x)\leq \beta(t)\gamma(s)$,
\item $\mathbb{E}(\kappa(\cdot,\mathcal{X}))$ is continuous.
\end{itemize}
Both these conditions are satisfied for a large class of kernels $\kappa$, such as the exponential ones for example.

Thus, the random measure $\nu$ defined as
\begin{equation}\label{comp}
\nu(dt,dx)=\kappa(t,dx)dt
\end{equation}
is the compensator of $N$ with respect to the natural filtration of $N$.

We use the notation $N(t,A)$ for $N((0,t],A)$, and $N(t)$ for  $N((0,t],\mathcal{X})$. In particular, $N(0,A)=N(0)=0.$

\subsection{Compound Marked Hawkes Process }
Let $N_t$, $t\geq 0$ be the classical Hawkes process. The classical compound Hawkes process is given as
\[\sum_{n=1}^{N_t} Z_n,\quad t\geq 0,\]
for a sequence $Z_n,\ n=1,2,\ldots$ of real valued random variables (see e.g. \cite{StaTor2010}).

This concept generalizes in a natural way to compound marked Hawkes processes.
We will consider two examples of compound marked Hawkes processes that are motivated by applications in insurance, in particular in cyber-insurance (see e.g. \cite{Zeller2021}) .

\subsubsection{Compound Marked Hawkes Process -- Example 1}
Fix $A\in \mathbfcal{X} $ and let $Z^A_n,\ n=1,2,\ldots$ be a sequence of real valued random variables.    We define process $C^A$ as
\begin{equation}\label{CHA}
  C^A_t=\sum_{n=1}^{N(t,A)}\, Z^A_n =\int_{0}^{t} \zeta^A(s)N(ds,A),\quad t\geq 0,
\end{equation}
where $\zeta^A(s)=Z^A_n$ on $(T_{n-1}, T_n]$.

In the special case with $A=\mathcal{X}$  we simplify the notation to
\begin{equation*}
  C:=C^{\mathcal{X}},\ Z:=Z^{\mathcal{X}},\ \zeta:=\zeta^{\mathcal{X}}.
\end{equation*}
Thus,
\begin{equation}\label{CHX1}
  C_t=\sum_{n=1}^{N(t)}\, Z_n =\int_{0}^{t} \zeta(s)N(ds),\quad t\geq 0.
\end{equation}

\subsubsection{Compound Marked Hawkes Process -- Example 2}
Let $\varphi(x) $ be a real-valued, bounded Borel function. We define process $D^\varphi$ as
\begin{equation}\label{CHA1}
  D^\varphi_t=\int_{(0,t]}\int_{\mathcal{X}} \varphi(x)N(ds,dx)=\sum_{n=1}^{N(t)}\varphi(X_n),\quad t\geq 0.
\end{equation}
This is a special case of \eqref{CHX1} where $Z_n=\varphi(X_n)$.

In the insurance context this would correspond to the amount of claim $\varphi(X_n)$ depending on the type of the claim $X_n$.

\subsection{Thinning representation}
We denote the intensity of $N$ by $\lambda(\cdot)$,
so that
\[\lambda(t)=\kappa(t,\mathcal{X}).\]
\noindent
Let $\mathcal{N}(ds,dx,dz)$ be a time homogeneous Poisson random measure on $(\bR_+ \times \mathcal{X}\times \bR_+, \cB(\bR_+) \otimes  \mathbfcal{X}\otimes \cB(\bR_+) )$, defined on an appropriate extension of our probability space, with intensity $ds\bQ(dx)dz$. Then, we have for a bounded measurable function $h$
\begin{equation*}
\int_{0}^{\infty}\int_{\mathcal{X}}h(s,x)N(ds,dx)=\int_{0}^{\infty}\int_{\mathcal{X}}\int_{0}^{\lambda(s-)}h(s,x)\mathcal{N}(ds,dx,dz)
\end{equation*}
see, e.g., Lemma 1 in \cite{Mas1998}. In particular, for $h(s,x)=I_{\{(0,t]\}}(s)I_{\{A\}}(x)$, $A\in \mathbfcal{X} $,
\begin{equation}\label{thinning-A}
N(t,A)=\int_{\mathcal{X}}I_{\{A\}}(x)N(t,dx)=\int_{0}^{t}\int_{\mathcal{X}}\int_{0}^{\lambda(s-)}I_{\{A\}}(x)\mathcal{N}(ds,dx,dz).
\end{equation}
Consequently,
\begin{equation}\label{thinning-kappa}
\kappa(t,A)=\bQ(A)\left(m+\int_{0}^{t}\int_{\mathcal{X}}\int_{0}^{\lambda(s-)}f(t-s,x)\mathcal{N}(ds,dx,dz)\right ).
\end{equation}
Thus we obtain the representation of kernel \eqref{eq:kappa-G} in terms of  a time homogeneous Poisson random measure $\mathcal{N}$. Hence, in
 particular,
\begin{equation}\label{thinning-lambda}
\lambda(t)=m+\int_{0}^{t}\int_{\mathcal{X}}\int_{0}^{\lambda(s-)}f(t-s,x)\mathcal{N}(ds,dx,dz).
\end{equation}

\section{Auxiliary results}\label{sec:AuxRes}

While proving  auxiliary results we will use some ideas from \cite{HORST202194}.
\\
Towards this end we first define
\begin{equation}\label{F}
F(t)=\int_{\mathcal{X} } f(t,x)\mathbb{Q}(dx),\quad t\geq 0.
\end{equation}
We make the standing assumption that  $F$ is stable:
	\begin{equation}\label{Stab}
	\|F\|_{L^1}<1.
	\end{equation}
 Taking expectation on both sides in \eqref{thinning-lambda}  we get
\begin{align*}
\bE(\lambda(t))&=m+\bE\left (\int_{0}^{t}\int_{\mathcal{X}}\int_{0}^{\lambda(s-)}f(t-s,x)dz\bQ(dx)ds \right )\\
&=m+\int_{0}^{t}\int_{\mathcal{X}} \bE(\lambda(s))f(t-s,x)\bQ(dx)ds \\
&= m+\int_{0}^{t}\bE(\lambda(s))F(t-s)ds.
\end{align*}
So, we obtain the convolution Volterra equation of the second type
 \begin{equation}\label{Elambda}
  \bE(\lambda(t))=m+F\ast\bE(\lambda)(t),\quad t\geq 0,
\end{equation}
where $\ast$ denotes the convolution operator.

Let $r$ be the resolvent of $-F$  defined as the unique solution to the Volterra convolution equation \eqref{F-splot} below (which exists due the assumption \eqref{Stab}; see Theorem 3.1 and Definition 3.2 in Chapter 2 of \cite{Gripenberg1990})
\begin{equation}\label{F-splot}
	r(t)=F\ast r(t)-F(t),\quad t\geq 0.
	\end{equation}
	 Define $R(t)=-r(t),\ t\geq 0.$ Then, the unique solution to \eqref{Elambda} is given as (see Theorem 3.5 in Chapter 2 of \cite{Gripenberg1990} on page 44)
\begin{equation}\label{Elambda-sol}
  \bE(\lambda(t))=m\left (1+\int_{0}^{t}R(t-s)ds\right ),\quad t\geq 0.
\end{equation}

 Now, define
\[F_1(t)=F(t),\quad F_{n+1}(t)=F_n\ast F(t)=\int_{0}^{t}F_n(t-s)F(s)ds,\ n=1,2,3,\ldots .\]
Then
\begin{equation}\label{j-6-1}
	||F_n||_{L^1}=||F||^n_{L^1}.
	\end{equation}
Indeed, for $n=2$ we have
\begin{align*}
||F_{2}||_{L^1}&=\int_{0}^{\infty}dt\int_{0}^{t}F_1(t-s)F(s)ds=\int_{0}^{\infty}dt\int_{0}^{\infty}I_{s\leq t}F(t-s)F(s)ds\\
&=\int_{0}^{\infty}\left (\int_{0}^{\infty}I_{s\leq t}F(t-s)dt\right)F(s)ds=\int_{0}^{\infty}\left (\int_{s}^{\infty}F(t-s)dt\right)F(s)ds\\
&=\int_{0}^{\infty}F(u)du\int_{0}^{\infty}F(s)ds=||F||^2_{L^1}.
\end{align*}
For $n>2$ the proof proceeds accordingly.

Under assumption \eqref{Stab} the series of $F_n,\ n=1,2,3,\dots$ converges in $L^1$ and  we obtain that
\begin{equation}\label{RR}
  R=\sum_{n=1}^{\infty}F_n.
\end{equation}
Since $F_n\geq 0$, then in view of \eqref{j-6-1}  we have  $\|R\|_{L^1}= \sum_{n=1}^{\infty}\|F_n\|_{L^1}=\sum_{n=1}^{\infty}\|F\|^n_{L^1}$. Thus
\begin{equation}\label{R1}\|R\|_{L^1}=\frac{\|F\|_{L^1}}{1-\|F\|_{L^1}}<\infty.
\end{equation}

Note that \eqref{Elambda-sol} and \eqref{R1} imply that
\begin{equation}\label{lambda1}\sup_{t\geq 0} \bE(\lambda(t))<\infty.
\end{equation}

The first lemma will be useful for us, and it generalizes  Lemma 2 in \cite{BDHM} to our set-up.

\begin{lemma}\label{lem:0}
We have
\begin{align}\label{eq:key}
\bE( N(t,A))=\bQ(A) tm +\int_{0}^{t}F(t-s)\bE( N(s,A))ds .
\end{align}
\end{lemma}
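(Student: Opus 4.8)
The plan is to compute $\bE(N(t,A))$ by conditioning on the compensator of $N$, exactly as the paper already did for $\bE(\lambda(t))$. Start from the thinning representation \eqref{thinning-A},
\[
N(t,A)=\int_{0}^{t}\int_{\mathcal{X}}\int_{0}^{\lambda(s-)}I_{\{A\}}(x)\,\mathcal{N}(ds,dx,dz),
\]
and take expectations. Since $\mathcal{N}$ has intensity $ds\,\bQ(dx)\,dz$ and $\lambda(s-)$ is predictable, the compensation formula gives
\[
\bE(N(t,A))=\bE\!\left(\int_{0}^{t}\int_{\mathcal{X}}\int_{0}^{\lambda(s-)}I_{\{A\}}(x)\,dz\,\bQ(dx)\,ds\right)
=\bQ(A)\int_{0}^{t}\bE(\lambda(s))\,ds.
\]
Equivalently, one may simply note that $\nu(dt,dx)=\kappa(t,dx)dt$ is the compensator of $N$, so $\bE(N(t,A))=\bE\big(\int_0^t\kappa(s,A)\,ds\big)=\bQ(A)\int_0^t\bE(\lambda(s))\,ds$ by \eqref{eq:kappa-G} and Fubini (justified by \eqref{lambda1}).

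Next I would insert the Volterra equation \eqref{Elambda} for $\bE(\lambda)$. Writing $u(t):=\bE(N(t,A))=\bQ(A)\int_0^t\bE(\lambda(s))\,ds$ and using $\bE(\lambda(s))=m+\int_0^s F(s-r)\bE(\lambda(r))\,dr$, we get
\[
u(t)=\bQ(A)\int_0^t\!\Big(m+\int_0^sF(s-r)\bE(\lambda(r))\,dr\Big)ds
=\bQ(A)tm+\bQ(A)\int_0^t\!\int_0^sF(s-r)\bE(\lambda(r))\,dr\,ds.
\]
Now apply Fubini to the double integral and swap the order: the region is $0\le r\le s\le t$, so
\[
\int_0^t\!\int_0^sF(s-r)\bE(\lambda(r))\,dr\,ds=\int_0^t\bE(\lambda(r))\Big(\int_r^tF(s-r)\,ds\Big)dr=\int_0^t\bE(\lambda(r))\Big(\int_0^{t-r}F(v)\,dv\Big)dr.
\]
A second application of Fubini (or a direct substitution) recognizes $\int_0^{t-r}F(v)\,dv\cdot\bE(\lambda(r))$, integrated in $r$, as $\int_0^tF(t-s)\big(\int_0^s\bE(\lambda(r))\,dr\big)ds=\int_0^tF(t-s)\,\frac{u(s)}{\bQ(A)}\,ds$. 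Substituting back yields
\[
u(t)=\bQ(A)tm+\int_0^tF(t-s)\,u(s)\,ds,
\]
which is \eqref{eq:key}.

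The only genuine obstacles are bookkeeping ones: making sure every Fubini/Tonelli swap is legitimate, which is immediate since $F\ge 0$, $\bE(\lambda(\cdot))\ge 0$, and $\sup_{t\ge0}\bE(\lambda(t))<\infty$ by \eqref{lambda1}, so all integrands are nonnegative and locally integrable on $[0,t]$; and correctly identifying the iterated integral after the change of variables so that the inner integral reassembles $u(s)=\bQ(A)\int_0^s\bE(\lambda(r))\,dr$. Alternatively, the whole computation can be shortened by taking expectations directly in the measure identity $N((0,t],A)=\nu((0,t],A)+(\text{martingale})$ and then substituting \eqref{Elambda} under the integral sign; either route is routine once integrability is noted. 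I would present the compensator route first and do the Fubini swap explicitly, since that is where a careless reader might stumble.
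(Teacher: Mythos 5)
Your proposal is correct and follows essentially the same route as the paper: both pass through $\bE(N(t,A))=\bQ(A)\int_0^t\bE(\lambda(s))\,ds$, substitute the Volterra relation for $\bE(\lambda)$ (which the paper re-derives from the thinning representation of $\kappa$ rather than quoting \eqref{Elambda}), and then rearrange the resulting double integral. The only cosmetic difference is that where you perform a second Fubini swap to turn $\int_0^t\big(\int_0^{t-r}F(v)\,dv\big)\bE(\lambda(r))\,dr$ into $\int_0^tF(t-s)\bE(N(s,A))\,ds/\bQ(A)$, the paper establishes the same identity by an integration-by-parts argument (its equation \eqref{eq:ibp-trick}); the two are interchangeable.
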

\proof
Let
\[\Lambda(r)=\int_{0}^{r}\bE(\lambda(u-))du=\int_{0}^{r}\bE(\lambda(u))du.\]
So, by \eqref{thinning-A} and \eqref{thinning-kappa}, and using Fubini's theorem and definition of $F$ (see \eqref{F}), we conclude that
\begin{align*}
\bE(N(t,A))&=\bE\bigg(\int_{0}^{t}\int_{\mathcal{X}}I_{\{A\}}(x)N(ds,dx)\bigg)=\bE\bigg(\int_{0}^{t}\int_{\mathcal{X}}I_{\{A\}}(x)\kappa(s,dx)ds \bigg) \nonumber \\
&=\bE\bigg(\int_{0}^{t}ds\int_{\mathcal{X}}I_{\{A\}}(x)
\bigg(m+\int_{0}^{s}\int_{\mathcal{X}}\int_{0}^{\lambda(r-)}f(s-r,y)\mathcal{N}(dr,dy,dz)\bigg)\bQ(dx)\bigg)\\
&=\bE\bigg (\int_{0}^{t}ds
\bigg(m+\int_{0}^{s}\int_{\mathcal{X}}\int_{0}^{\lambda(r-)}f(s-r,y)\mathcal{N}(dr,dy,dz)\bigg)\bQ(A)\bigg)\\
&=\bQ(A)mt+\int_{0}^{t}ds
\bigg(\int_{0}^{s}\int_{\mathcal{X}}\bE(\lambda(r))f(s-r,y)dr\bQ(dy)\bigg)\bQ(A)\\
&=\bQ(A)mt+\bQ(A)\int_{0}^{t}ds
\int_{0}^{s}F(s-r)d\Lambda(r).
\end{align*}
Now, again by Fubini's theorem,
\begin{align} \label{j-7-1}
\int_{0}^{t}ds \int_{0}^{s}F(s-r)d\Lambda(r)=\int_{0}^{t}\left(\int_{r}^{t}F(s-r)ds\right) d\Lambda(r)=\int_{0}^{t}\left(\int_{0}^{t-r}F(s)ds\right) d\Lambda(r).
\end{align}
Let
\[\widehat F(t)=\int_{0}^{t}F(s)ds.\]
Then, by integration by parts,
\begin{align}\label{eq:ibp-trick}
0=\Lambda(t)\widehat F(0)-\Lambda(0)\widehat F(t)=\int_{0}^{t}\widehat F(t-r) d\Lambda(r)-\int_{0}^{t}F(t-r) \Lambda(r)dr.
\end{align}
Hence, from \eqref{j-7-1} and the fact that  $\kappa(t,A)=\bQ(A)\lambda(t)$ we obtain
\begin{align*}
&\bQ(A)\int_{0}^{t}ds \int_{0}^{s}F(s-r)d\Lambda(r)
=\int_{0}^{t}F(t-r)\bQ(A)\Lambda(r)dr\\&=\int_{0}^{t}F(t-r)\left(\bQ(A)\int_{0}^{r}\bE(\lambda(u))du\right )dr=\int_{0}^{t}F(t-r)\bE(N(r,A))dr.
\end{align*}
The proof is complete.   \qed

The following two lemmas generalize to our set-up formula (13) in Lemma 4 in \cite{BDHM} and  Lemma 5 for the case  $p=1$ in \cite{BDHM}, respectively.
\begin{lemma}\label{lem:1} Assume \eqref{Stab}. We have
\begin{equation}\label{eq:Lem2-1}
\bE( N(t,A))=\bQ(A)\left (tm+m\int_{0}^{t}sR(t-s)ds\right ).
\end{equation}
\end{lemma}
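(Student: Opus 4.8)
The plan is to recognize \eqref{eq:key} from Lemma \ref{lem:0} as a renewal-type (convolution Volterra) equation for the function $t\mapsto \bE(N(t,A))$ and solve it explicitly using the resolvent $R$ already constructed in the text. Writing $g(t):=\bE(N(t,A))$ and $h(t):=\bQ(A)\,tm$, Lemma \ref{lem:0} says exactly that $g = h + F\ast g$. Since $\|F\|_{L^1}<1$ by \eqref{Stab}, this equation has a unique (locally bounded) solution, and by the standard resolvent formula (Theorem 3.5 in Chapter 2 of \cite{Gripenberg1990}, the same result invoked for \eqref{Elambda-sol}) that solution is $g = h + R\ast h$, where $R=-r$ is the resolvent of $-F$ satisfying \eqref{F-splot}. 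Substituting $h(t)=\bQ(A)mt$ gives
\[
\bE(N(t,A)) = \bQ(A)mt + \int_0^t R(t-s)\,\bQ(A)ms\,ds = \bQ(A)\left(tm + m\int_0^t sR(t-s)\,ds\right),
\]
which is \eqref{eq:Lem2-1}.

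An alternative, more self-contained route avoids quoting the resolvent theorem for this particular right-hand side: iterate \eqref{eq:key}. Since $g = h + F\ast g$, unrolling $n$ times yields $g = \sum_{k=0}^{n} F_k \ast h + F_{n+1}\ast g$ (with $F_0\ast h := h$), and because $\|F_{n+1}\|_{L^1}=\|F\|_{L^1}^{n+1}\to 0$ by \eqref{j-6-1} while $g$ is locally bounded (it is dominated by $\bE(N(t))=\Lambda(t)<\infty$ thanks to \eqref{lambda1}), the remainder term vanishes as $n\to\infty$. Using $R=\sum_{n\ge1}F_n$ from \eqref{RR} (convergence in $L^1$ under \eqref{Stab}), we get $g = h + \big(\sum_{n\ge1}F_n\big)\ast h = h + R\ast h$, and then substitute $h(t)=\bQ(A)mt$ as above. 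I would present whichever of these two is cleaner; the first is shorter given that the resolvent machinery is already in place.

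The only genuinely delicate point is a regularity/integrability check needed to legitimately apply the resolvent formula or to justify the interchange of limit and convolution in the iteration argument: one must know that $t\mapsto\bE(N(t,A))$ is finite and locally bounded (indeed locally integrable) on $[0,\infty)$. This follows from $\bE(N(t,A))\le \bE(N(t))=\int_0^t\bE(\lambda(s))\,ds$ together with \eqref{lambda1}, so it is routine; I would state it in one line. A secondary bookkeeping point is the convention $F_0\ast h=h$ and the fact that $R\ast h$ is well defined and continuous since $R\in L^1$ and $h$ is continuous and locally bounded — again immediate. No other obstacles are expected; the result is essentially a restatement of \eqref{eq:key} in resolved form, exactly paralleling the passage from \eqref{Elambda} to \eqref{Elambda-sol}. $\qed$
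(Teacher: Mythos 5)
Your proof is correct, but it takes a different (though closely parallel) route from the paper's. You start from the renewal equation \eqref{eq:key} of Lemma \ref{lem:0} for $g(t)=\bE(N(t,A))$ and apply the resolvent formula $g=h+R\ast h$ with forcing $h(t)=\bQ(A)mt$. The paper instead never touches Lemma \ref{lem:0} here: it reduces to $\bE(N(t,A))=\bQ(A)\,\bE\big(\int_0^t\lambda(s)\,ds\big)$ via the compensator identity $\kappa(t,A)=\bQ(A)\lambda(t)$, integrates the already-established formula \eqref{Elambda-sol} for $\bE(\lambda(t))$, and finishes with the convolution identity $1\ast R\ast 1(t)=\int_0^t sR(t-s)\,ds$. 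Algebraically the two computations coincide (since $h=m\,(1\ast 1)$ up to the factor $\bQ(A)$, so $R\ast h=m\,(1\ast R\ast 1)$ by associativity), but the bookkeeping differs: the paper's route reuses the resolvent theorem only for the constant forcing $m$, which it has already justified, and needs nothing further; your route requires re-invoking the resolvent formula (or the iteration argument) for the new forcing $h(t)=\bQ(A)mt$, and hence the local boundedness of $\bE(N(t,A))$ to secure uniqueness --- a point you correctly identify and dispose of via \eqref{lambda1}. Your version has the minor advantage of making Lemma \ref{lem:1} a literal corollary of Lemma \ref{lem:0}, which clarifies the logical structure; the paper's has the advantage of not needing any new hypothesis checks. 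Either is acceptable.
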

\proof
Since
\[ \bE( N(t,A))=\bQ(A)\bE\left (\int_0^t \lambda(s)ds\right)\]
it is enough to prove that
\begin{align*}
	\bE\left (\int_0^t \lambda(s)ds\right )=m\left (t+\int_{0}^{t}sR(t-s)ds\right ).
\end{align*}
First, we observe that
\begin{align*}
	\int_{0}^{t} \int_{0}^{s}R(s-r)dr ds =
	\int_{0}^{t} R\ast 1( s)ds = 1\ast R\ast 1(t)=\int_{0}^{t}sR(t-s)ds.
\end{align*}
Next, using \eqref{Elambda-sol} and Fubini's theorem
we have
\begin{align*}\bE\left (\int_0^t \lambda(s)ds\right )&=\int_0^t \bE\left (\lambda(s)\right )ds \\
	&=m\left (t+\int_{0}^{t}\left (\int_{0}^{s}R(s-r)dr\right )ds\right )=m\left (t+\int_{0}^{t}sR(t-s)ds\right ).
\end{align*}
 This finishes the proof. \qed

\begin{lemma}\label{lem:2}
Assume \eqref{Stab} and
\begin{equation}\label{tF}
	\int_{0}^{\infty }t F(t)dt<\infty .
	\end{equation}
We have
\begin{equation}\label{j-9-0}
		T^{-1}\bE( N(Tv,A))-vm\bQ(A)(\|R\|_{L^1}+1)\longrightarrow 0
	\end{equation}
as $T\rightarrow \infty$, uniformly in $v\geq 0$ on any bounded interval.
\end{lemma}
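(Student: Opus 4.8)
The plan is to start from the closed form \eqref{eq:Lem2-1} of Lemma \ref{lem:1} and convert the quantity in \eqref{j-9-0} into something controlled by the tail of $R$. Substituting $u=Tv-s$ in the integral $\int_0^{Tv}sR(Tv-s)\,ds$ appearing in \eqref{eq:Lem2-1} gives
\begin{equation*}
T^{-1}\bE(N(Tv,A))=m\bQ(A)\Bigl(v+v\!\int_0^{Tv}\!R(u)\,du-\tfrac1T\!\int_0^{Tv}\!uR(u)\,du\Bigr),
\end{equation*}
so that, using $\int_0^{Tv}R(u)\,du-\|R\|_{L^1}=-\int_{Tv}^{\infty}R(u)\,du$ (recall $R\ge0$ and $R\in L^1$ by \eqref{R1}),
\begin{equation*}
T^{-1}\bE(N(Tv,A))-vm\bQ(A)(\|R\|_{L^1}+1)=-m\bQ(A)\Bigl(v\!\int_{Tv}^{\infty}\!R(u)\,du+\tfrac1T\!\int_0^{Tv}\!uR(u)\,du\Bigr).
\end{equation*}

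Next I would estimate the right-hand side uniformly. Since $R\ge0$ and $u\ge Tv$ forces $v\le u/T$, the first term obeys $v\int_{Tv}^{\infty}R(u)\,du\le\frac1T\int_{Tv}^{\infty}uR(u)\,du$; combining with the second term yields the clean, $v$-free bound
\begin{equation*}
\Bigl|T^{-1}\bE(N(Tv,A))-vm\bQ(A)(\|R\|_{L^1}+1)\Bigr|\le\frac{m\bQ(A)}{T}\int_0^{\infty}uR(u)\,du .
\end{equation*}
Hence the whole statement follows once we know that $\int_0^{\infty}uR(u)\,du<\infty$; in fact this even gives convergence uniformly over all $v\ge0$, slightly more than claimed. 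Establishing this finiteness is the step I expect to be the main obstacle.

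To prove $\int_0^{\infty}uR(u)\,du<\infty$ I would use the series representation $R=\sum_{n=1}^{\infty}F_n$ from \eqref{RR} together with a first-moment recursion for the iterated convolutions $F_n$. Put $\rho=\|F\|_{L^1}$, so $\rho<1$ by \eqref{Stab} and $\|F_n\|_{L^1}=\rho^n$ by \eqref{j-6-1}, and set $a_n=\int_0^{\infty}tF_n(t)\,dt$, with $a_1=\int_0^{\infty}tF(t)\,dt<\infty$ by \eqref{tF}. A Fubini/Tonelli computation on $F_{n+1}=F_n\ast F$, splitting $t=(t-s)+s$ inside the convolution, gives
\begin{equation*}
a_{n+1}=\Bigl(\int_0^{\infty}\!uF_n(u)\,du\Bigr)\Bigl(\int_0^{\infty}\!F(s)\,ds\Bigr)+\Bigl(\int_0^{\infty}\!F_n(u)\,du\Bigr)\Bigl(\int_0^{\infty}\!sF(s)\,ds\Bigr)=\rho\,a_n+\rho^{n}a_1 .
\end{equation*}
An immediate induction then yields $a_n=n\rho^{n-1}a_1$, and since the $F_n$ are nonnegative, Tonelli permits the interchange
\begin{equation*}
\int_0^{\infty}uR(u)\,du=\sum_{n=1}^{\infty}a_n=a_1\sum_{n=1}^{\infty}n\rho^{n-1}=\frac{a_1}{(1-\rho)^{2}}<\infty .
\end{equation*}

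Finally I would assemble the pieces: substituting this last bound into the displayed estimate gives \eqref{j-9-0}, uniformly in $v\ge0$ and hence on any bounded interval. The only points requiring a word of care are the justification of the Fubini/Tonelli interchanges — harmless since every integrand is nonnegative — and the fact that $\sum_{n=1}^{N}F_n\uparrow R$ almost everywhere, which holds because these partial sums are nondecreasing and converge to $R$ in $L^1$ by \eqref{RR}, so that monotone convergence applies to $\int_0^{\infty}u\sum_{n}F_n(u)\,du$.
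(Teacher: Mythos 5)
Your proof is correct and follows essentially the same route as the paper: both start from the closed form \eqref{eq:Lem2-1} of Lemma \ref{lem:1}, derive the identity expressing the error as $-m\bQ(A)\bigl(v\int_{Tv}^\infty R(u)\,du+T^{-1}\int_0^{Tv}uR(u)\,du\bigr)$ (the paper's \eqref{j-8-1}), and reduce everything to the finiteness of $\int_0^\infty uR(u)\,du$. The only differences are to your credit: you actually prove the first-moment bound $\int_0^\infty uR(u)\,du=\int_0^\infty tF(t)\,dt/(1-\|F\|_{L^1})^2$ via the recursion $a_{n+1}=\rho a_n+\rho^n a_1$ (the paper merely asserts this as ``straightforward calculations'' in \eqref{j-10-1}), and your estimate $v\int_{Tv}^\infty R(u)\,du\le T^{-1}\int_{Tv}^\infty uR(u)\,du$ yields the slightly stronger conclusion of uniformity over all $v\ge0$.
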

\proof
Using Lemma \ref{lem:1} we have
\begin{align}
&	 vm\bQ(A)(\|R\|_{L^1}+1) -T^{-1}\bE( N(Tv,A)) \notag \\ & =vm\bQ(A)(\|R\|_{L^1}+1)-vm\bQ(A)-m\bQ(A)T^{-1}\int_0^{Tv}tR(Tv-t)dt  \notag \\
	&
=vm\bQ(A)\|R\|_{L^1}-m\bQ(A)T^{-1}\left (Tv\int_0^{Tv}R(Tv-t)dt-\int_0^{Tv}R(Tv-t)(Tv-t)dt\right)  \notag \\
 &=vm\bQ(A)\|R\|_{L^1}-vm\bQ(A)\int_0^{Tv}R(t)dt+m\bQ(A)T^{-1}\int_{0}^{Tv}tR(t)dt  \notag  \\
&=vm\bQ(A)\int_{Tv}^\infty R(t)dt+m\bQ(A)T^{-1}\int_{0}^{Tv}tR(t)dt, \label{j-8-1}
\end{align}
where in the second equation we add and subtract the same factor, and  in the third equation we use the substitution $u=Tv-t$.
Since $R$ is integrable it follows that
\[vm\bQ(A)\int_{Tv}^\infty R(t)dt\longrightarrow 0\]
as $T\rightarrow \infty$, uniformly in $v\geq 0$ on any bounded interval, so the first term in \eqref{j-8-1} goes to zero in the desired manner. Now we will estimate the second term in \eqref{j-8-1}. Straightforward calculations give that
\begin{equation}\label{j-10-1}
		\int_{0}^{\infty} tR(t)dt \leq \frac{\int_{0}^{\infty}tF(t)dt}{(1-\|F\|_{L^1})^2}<\infty.
\end{equation}		
 Finally, since $R\geq 0$, we have for arbitrary $b>0$,\[0\leq \sup_{v\in [0,b]}m\bQ(A)T^{-1}\int_{0}^{Tv}R(t)tdt= m\bQ(A)T^{-1}\int_{0}^{Tb}tR(t)dt\longrightarrow 0\]

\begin{remark}
The result \eqref{j-9-0} can be equivalently stated as
\begin{equation}\label{Lem2-lambda}
\bE \left ( T^{-1}\int_0^{Tv} \lambda(t)dt\right )-vm(\|R\|_{L^1}+1)\longrightarrow 0
\end{equation}
as $T\rightarrow \infty$, uniformly in $v\in [0,b]$ for $b>0$.
Property \eqref{Lem2-lambda} relates to Proposition 3.4 in \cite{HORST202194}. However, the techniques used in \cite{HORST202194} would require that $m=0$ for \eqref{Lem2-lambda} to be satisfied. This would mean that there is no external excitation and therefore diminishing potential applicability of the model. Our techniques employed in this paper allow for $m>0$  and render our model more adequate for applications.

\qed
\end{remark}

Before we proceed recall that
\begin{align}\label{j-9-1}
	\kappa(t,A)=\bQ(A)\lambda(t).
\end{align}
Accordingly, for a fixed $A$, the process $M(t,A),\ t\geq 0,$ given as
\begin{align}\label{M}
	M(t,A)=N(t,A)-\bQ(A)\int_{0}^{t}\lambda(s)ds
\end{align}
is a martingale with jump size equal to 1. Similarly,
	\begin{align}\label{j-9-2}
		\widetilde N(dt,dx):=N(dt,dx)-\bQ(dx)\lambda(t)dt
	\end{align}
is a (local-)martingale measure (cf. e.g. \cite{Kallenberg2017}).

\begin{lemma}\label{lem:predict}
Assume \eqref{Stab}. We have
\begin{align}\label{sup1-predict}
\sup_{ t\in [0,T] }\bE \Big( \Big|	\int_0^t \Big(\int_0^{t-s} F(w) dw
	\Big) d M(s,A) \Big|^2 \Big)
	\leq \bQ(A) \| F \|_{L^1}^2 m(1 + \| R  \|_{L^1}) T
\end{align}
and
\begin{align}\label{sup2-predict}
	&	\sup_{t\in [0,T] } \bE \Big( \Big|\int_0^t \int_\cX \Big(\int_0^{t-u}  f(s,x) ds\Big) \tilde{N}(du,dx) \Big|^2 \Big)
	\leq \int_\cX \| f(\cdot,x)\|^2_{L^1} \bQ (dx) m (1 + \| R  \|_{L^1}) T .
\end{align}
\end{lemma}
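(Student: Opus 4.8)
The plan is to treat both estimates as instances of the same martingale $L^2$-bound: for a predictable integrand $\phi$ and the martingale (measure) in question, compute the predictable quadratic variation and take expectations. For \eqref{sup1-predict}, recall from \eqref{M} that $M(\cdot,A)$ is a pure-jump martingale with jumps of size $1$, whose compensator is $\bQ(A)\int_0^t\lambda(s)\,ds$; hence its predictable quadratic variation is $\langle M(\cdot,A)\rangle_t=\bQ(A)\int_0^t\lambda(s)\,ds$ (since jumps have size one, the quadratic variation and its compensator agree with the compensator of $N(\cdot,A)$). For fixed $t\in[0,T]$ the integrand $s\mapsto \widehat F(t-s)=\int_0^{t-s}F(w)\,dw$ is deterministic and bounded by $\|F\|_{L^1}$, so by the Itô isometry for martingales,
\[
\bE\Big(\Big|\int_0^t \widehat F(t-s)\,dM(s,A)\Big|^2\Big)=\bE\Big(\int_0^t \widehat F(t-s)^2\,d\langle M(\cdot,A)\rangle_s\Big)\le \|F\|_{L^1}^2\,\bQ(A)\,\bE\Big(\int_0^t\lambda(s)\,ds\Big).
\]
By Lemma \ref{lem:1} (or directly \eqref{Elambda-sol} and \eqref{R1}), $\bE\big(\int_0^t\lambda(s)\,ds\big)=m\big(t+\int_0^t sR(t-s)\,ds\big)\le m(1+\|R\|_{L^1})t\le m(1+\|R\|_{L^1})T$ for $t\le T$; taking the supremum over $t\in[0,T]$ yields \eqref{sup1-predict}.

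For \eqref{sup2-predict} the argument is the same but now the martingale is the compensated random measure $\widetilde N(du,dx)$ from \eqref{j-9-2}, whose compensator is $\bQ(dx)\lambda(u)\,du$. The integrand $(u,x)\mapsto \int_0^{t-u}f(s,x)\,ds$ is predictable and bounded in absolute value by $\|f(\cdot,x)\|_{L^1}$. By the isometry for integrals against a martingale measure with this compensator,
\[
\bE\Big(\Big|\int_0^t\!\int_\cX\Big(\int_0^{t-u}f(s,x)\,ds\Big)\widetilde N(du,dx)\Big|^2\Big)=\bE\Big(\int_0^t\!\int_\cX\Big(\int_0^{t-u}f(s,x)\,ds\Big)^2\bQ(dx)\lambda(u)\,du\Big),
\]
which is at most $\big(\int_\cX\|f(\cdot,x)\|_{L^1}^2\,\bQ(dx)\big)\,\bE\big(\int_0^t\lambda(u)\,du\big)$. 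Bounding $\bE\big(\int_0^t\lambda(u)\,du\big)$ as before by $m(1+\|R\|_{L^1})T$ and taking the supremum over $t\in[0,T]$ gives \eqref{sup2-predict}. (Assumption \eqref{f} is exactly what makes $\int_\cX\|f(\cdot,x)\|_{L^1}^2\,\bQ(dx)<\infty$, so the right-hand sides are finite.)

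The one point requiring a little care — and the step I would regard as the main obstacle — is the precise justification of the isometry. Two issues arise: first, for each \emph{fixed} $t$ the integrand $\widehat F(t-\cdot)$, resp. $\int_0^{t-\cdot}f(s,\cdot)\,ds$, depends on $t$, so strictly speaking we are not integrating a single process but, for each $t$, a bounded deterministic (hence predictable) process on $[0,t]$ — this is fine, but one must phrase the isometry pointwise in $t$ and only afterwards take the supremum. Second, $\widetilde N$ is a priori only a \emph{local} martingale measure (as noted after \eqref{j-9-2}); to apply the isometry one localises, uses the bound $\bE\big(\int_0^{t\wedge\tau_n}\lambda\big)\le m(1+\|R\|_{L^1})T<\infty$ uniformly in $n$, and passes to the limit by monotone convergence on the right and Fatou on the left. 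The boundedness of $f$ (hence of the integrands) together with \eqref{lambda1}/\eqref{R1} makes all these limiting operations legitimate, and the displayed inequalities follow. $\qed$
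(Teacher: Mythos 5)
Your argument is correct and is essentially the paper's own proof: both rest on the martingale $L^2$-isometry (the paper phrases it via the optional quadratic variation $[M(\cdot,A)]=N(\cdot,A)$ and Proposition I.4.50\,c) of Jacod--Shiryaev, then compensates $N$ by $\bQ(A)\lambda(s)\,ds$, which is exactly your $\langle M(\cdot,A)\rangle$), followed by the bound $\bE\int_0^t\lambda(s)\,ds\le m(1+\|R\|_{L^1})T$ from \eqref{Elambda-sol}. The paper handles the $t$-dependence of the integrand by freezing a parameter $L$ and inserting the indicator $\I_{\{s\le L\}}$ before setting $L=t$, which is the same pointwise-in-$t$ reading you describe, so no substantive difference remains.
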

\begin{proof}
Fix $L\in [0,T]$ and define process $X_t(L)=\int_0^t \I_{\{s\leq L\}}\Big(\int_0^{L-s} F(w) dw
	\Big) d M(s,A).$  Since $f$ is a bounded function by assumption, then $F$ is bounded as well. Consequently, $\int_0^{L-s} F(w) dw$ is bounded. In view of Proposition I.4.50 c) in \cite{js1987} we see that $X(L)$ is a square integrable martingale. Thus
\begin{align*}
& \bE \Big( \Big|	\int_0^t \I_{\{s\leq L\}}\Big(\int_0^{L-s} F(w) dw
	\Big) d M(s,A) \Big|^2 \Big)
	=
\bE \Big( \int_0^t \I_{\{s\leq L\}}\Big(\int_0^{L-s} F(w) dw
\Big)^2 d N(s,A)  \Big)	
\\
&	=
\bQ(A) \bE \Big( \int_0^t\I_{\{s\leq L\}} \Big(\int_0^{L-s} F(w) dw
\Big)^2  \lambda(s)ds  \Big)	
\leq \bQ(A) \| F \|_{L^1}^2\bE \Big( \int_0^t \lambda(s)ds  \Big)\\
&
\leq \bQ(A) \| F \|_{L^1}^2 m(1 + \| R  \|_{L^1}) t,
\end{align*}	
where in the second equation we use \eqref{j-9-1}, and  in the last inequality we use
\eqref{Elambda-sol}. Hence, taking $L=t$ we obtain
\[\bE \Big( \Big|	\int_0^t \Big(\int_0^{t-s} F(w) dw
	\Big) d M(s,A) \Big|^2 \Big)\leq \bQ(A) \| F \|_{L^1}^2 m(1 + \| R  \|_{L^1}) t
\]
and thus
\begin{align*}
	\sup_{ 0 \leq t
	\leq T }\bE \Big( \Big|	\int_0^t \Big(\int_0^{t-s} F(w) dw
	\Big) d M(s,A) \Big|^2 \Big)
	\leq \bQ(A) \| F \|_{L^1}^2 m(1 + \| R  \|_{L^1}) T,
\end{align*}
which proves \eqref{sup1-predict}. The proof of \eqref{sup2-predict} is done in an analogous way. 	
\end{proof}

Given the above we can prove the following result:

\begin{lemma}\label{lem:L2} Assuming \eqref{Stab} we have
\begin{equation}\label{eq:L2}T^{-1} |N(T,A)-\bE(N(T,A))|\longrightarrow 0
\end{equation}
as $T\rightarrow \infty$, in  $L^2(\bP)$.
\end{lemma}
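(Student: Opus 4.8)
The plan is to adapt the strategy of \cite{BDHM}, the new ingredient being a \emph{pathwise} counterpart of Lemma~\ref{lem:0} -- the representation \eqref{eq:ntA}
\[
N(t,A)=\bQ(A)mt+\int_0^t F(t-s)N(s,A)\,ds+G(t,A),\qquad t\ge 0,
\]
holding simultaneously in $t$ on a full-probability set, where, writing $\widehat F(v)=\int_0^v F(w)\,dw$,
\[
G(t,A)=M(t,A)+\bQ(A)\int_0^t\!\!\int_{\cX}\Big(\int_0^{t-s}f(w,x)\,dw\Big)\widetilde N(ds,dx)-\int_0^t\widehat F(t-s)\,dM(s,A).
\]
To obtain this, integrate the thinning formula \eqref{thinning-lambda} over $[0,t]$, interchange the $ds$-integral with the integral against $\mathcal{N}$ by a pathwise Fubini argument, split $\mathcal{N}=\widetilde{\mathcal{N}}+dz\,\bQ(dx)\,ds$, and use $\int_{\cX}f(\cdot,x)\bQ(dx)=F$ (see \eqref{F}); then multiply by $\bQ(A)$, use $N(t,A)=M(t,A)+\bQ(A)\int_0^t\lambda(s)ds$ (see \eqref{M}), and apply the integration-by-parts manipulation of \eqref{eq:ibp-trick} (pathwise, now to the $C^1$ process $s\mapsto\bQ(A)\int_0^s\lambda(u)du=N(s,A)-M(s,A)$ and then to $M(\cdot,A)$, the covariation terms vanishing since $\widehat F$ is continuous) to recognise the self-exciting contribution $\bQ(A)\int_0^t\lambda(s)\widehat F(t-s)ds$ as $\int_0^t F(t-s)N(s,A)ds-\int_0^t\widehat F(t-s)dM(s,A)$. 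Each summand of $G(\cdot,A)$ is a mean-zero square-integrable martingale evaluated at $t$, so $\bE(G(t,A))=0$ and taking expectations in \eqref{eq:ntA} recovers Lemma~\ref{lem:0}.

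Subtracting Lemma~\ref{lem:0} from \eqref{eq:ntA}, the deterministic term $\bQ(A)mt$ cancels, so $n(t,A):=N(t,A)-\bE(N(t,A))$ solves the pathwise linear convolution equation $n(t,A)=\int_0^t F(t-s)n(s,A)\,ds+G(t,A)$. Since $\|F\|_{L^1}<1$, the resolvent $R$ (i.e.\ $R=-r$ with $r$ solving \eqref{F-splot}) exists and satisfies $R=F+F\ast R$ (equivalently \eqref{RR}); as $t\mapsto G(t,A)$ is a.s.\ locally bounded, the equation has the unique solution
\[
n(t,A)=G(t,A)+\int_0^t R(t-s)G(s,A)\,ds .
\]

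It remains to bound $\|G(t,A)\|_{L^2(\bP)}$. For the first summand, $M(\cdot,A)$ is the compensated counting process $N(\cdot,A)$ with predictable quadratic variation $\bQ(A)\int_0^{\cdot}\lambda(s)ds$, so $\bE(M(t,A)^2)=\bQ(A)\bE\big(\int_0^t\lambda(s)ds\big)\le\bQ(A)m(1+\|R\|_{L^1})t$ by Lemma~\ref{lem:1}; the remaining two summands are bounded, respectively, by \eqref{sup2-predict} and \eqref{sup1-predict} of Lemma~\ref{lem:predict}, each by a constant times $t$. Hence $\|G(t,A)\|_{L^2}\le c\sqrt t$ for all $t$, with $c=c(A,m,F,R)$. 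Plugging this into the resolvent formula, Minkowski's integral inequality and $R\ge0$ give
\[
\|n(T,A)\|_{L^2}\le\|G(T,A)\|_{L^2}+\int_0^T R(T-s)\|G(s,A)\|_{L^2}\,ds\le c\sqrt T\big(1+\|R\|_{L^1}\big),
\]
whence $\big\|T^{-1}\big(N(T,A)-\bE(N(T,A))\big)\big\|_{L^2}\le c(1+\|R\|_{L^1})T^{-1/2}\to 0$, i.e.\ \eqref{eq:L2}.

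The main obstacle is the first step -- establishing \eqref{eq:ntA} as an identity valid for \emph{all} $t$ on one full-probability set, which is what makes the pathwise use of the resolvent legitimate. This requires justifying the Fubini interchange for the integral against the Poisson random measure $\mathcal{N}$ (licit pathwise, $f$ being bounded and $\mathcal{N}$ having finitely many atoms in each compact region), the integration-by-parts formulas for the mixed absolutely-continuous/pure-jump integrands, and the a.s.\ local boundedness of $t\mapsto G(t,A)$ (immediate, since it is a finite sum of jump terms plus locally bounded continuous terms, $f$ and $\widehat F$ being bounded on compacts). After \eqref{eq:ntA}, the rest is a routine second-moment estimate assembled from Lemmas~\ref{lem:1} and \ref{lem:predict}.
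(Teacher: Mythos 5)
Your proof is correct and follows the same skeleton as the paper's: both derive the renewal representation \eqref{eq:ntA} with the same driver (your $G(t,A)$ is exactly $J(t,A)$ of \eqref{JTA}), obtained by the same ingredients --- the thinning/compensator split, the integration-by-parts trick \eqref{eq:ibp-trick}, the martingale decomposition \eqref{M}, and the resolvent identity $R=F+F\ast R$ --- and both then feed in the second-moment bounds of Lemmas~\ref{lem:1} and~\ref{lem:predict}. The one genuine difference is the final estimate. The paper passes through the pathwise bound \eqref{nice-1}, $|n(T,A)|\le(1+\|R\|_{L^1})\sup_{s\in[0,T]}|J(s,A)|$, which in principle requires controlling $\bE\big(\sup_{s\in[0,T]}|J(s,A)|^2\big)$; Doob's inequality \eqref{Doob} covers the $M(\cdot,A)$ summand, but the two time-inhomogeneous stochastic integrals in $J$ are not martingales in $t$, and \eqref{sup1-predict}--\eqref{sup2-predict} literally bound only $\sup_{t\le T}\bE(|\cdot|^2)$ rather than $\bE(\sup_{t\le T}|\cdot|^2)$. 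You instead apply Minkowski's integral inequality directly to $n(T,A)=J(T,A)+\int_0^T R(T-s)J(s,A)\,ds$ with $R\ge 0$, so that only the pointwise bounds $\|J(s,A)\|_{L^2}\le c\sqrt{s}$ are needed --- which is exactly what Lemma~\ref{lem:predict} and the isometry $\bE(M(t,A)^2)=\bE N(t,A)$ deliver. For the pointwise-in-$T$ statement of the lemma your route is therefore a little cleaner and logically tighter; what it forgoes is the supremum control over $[0,T]$ that the paper's bound \eqref{nice-1} is set up to provide (and which becomes relevant for the uniform-in-$v$ discussion of Remark~\ref{difference}). Your side remarks are also accurate: the pathwise Fubini interchange and the identification $G=J$ are legitimate since $N$ has finitely many atoms on compacts and $f$, $\widehat F$ are bounded, and taking expectations in your pathwise equation indeed recovers Lemma~\ref{lem:0}.
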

\begin{proof}
Let $n(t,A):=N(t,A)-\bE(N(t,A))$.
  From \eqref{eq:key} and \eqref{M} we have
\begin{align*}
	n(t&,A)=M(t,A) + \int_0^t \bQ(A) \lambda(s) ds- \bE(N(t,A)) \\
	&=M(t,A) + \int_0^t \bQ(A) \lambda(s) ds-\bQ(A)tm -\int_{0}^{t}F(t-s)\bE( N(s,A))ds
	\\
	&=M(t,A) + \bQ(A) \underbrace{\int_0^t  (\lambda(s) -m) ds}_{=:I_1(t)} -
	\underbrace{\int_{0}^{t}F(t-s) N(s,A)ds}_{=:I_2(t)}
	 +\int_{0}^{t}F(t-s)n(s,A)ds.
\end{align*}
Using integration by parts trick as in \eqref{eq:ibp-trick} and definition of $M$ (see \eqref{M}) we have

\begin{align*}
I_2(t) & =\int_{0}^{t}F(t-s) N(s,A)ds
=\int_0^t \Big(\int_0^{t-s} F(w) dw
\Big) d N(s,A)
\\
&=\int_0^t \Big(\int_0^{t-s} F(w) dw
\Big) d M(s,A) + \bQ(A)\int_0^t \Big(\int_0^{t-s} F(w) dw
\Big) \lambda(s) ds.
\end{align*}
By definition of $\lambda$ and using Fubini's theorem we obtain
\begin{align*}
	I_1(t) & = \int_0^t (\lambda(s) -m) ds
	=
	\int_0^t \Big( \int_0^s \int_\cX f(s-u,x) N(du,dx)\Big)ds \\
	& =
	\int_0^t \int_\cX \Big(\int_u^t  f(s-u,x) ds\Big) N(du,dx)
	.
\end{align*}
Now substituting $I_1$ and $I_2$ into formula for $n(t,A)$, and using \eqref{j-9-2}, yields
\begin{align*}
	n(t,A) &=M(t,A) +\bQ(A) \int_0^t \int_\cX \Big(\int_0^{t-u}  f(s,x) ds\Big) \tilde{N}
	(du,dx)
	 \\
	 & \quad -\int_0^t \Big(\int_0^{t-s} F(w) dw
	 \Big) d M(s,A)
	+\int_{0}^{t}F(t-s)n(s,A)ds .
\end{align*}
So we have
\[
n(t,A ) = J(t,A) + \int_{0}^{t}F(t-s)n(s,A)ds,
\]
where
\begin{align}\label{JTA}
	J(t,A) &:=M(t,A) + \bQ(A) \int_0^t \int_\cX \Big(\int_0^{t-u}  f(s,x) ds\Big) \tilde{N}
(du,dx) \nonumber
\\
& \quad -\int_0^t \Big(\int_0^{t-s} F(w) dw
\Big) d M(s,A).
\end{align}
Thus by the basic renewal theorem (see, e.g., Lemma 3 in \cite{BDHM}) we have
\begin{align}\label{eq:ntA}
n(t,A ) = J(t,A) + \int_{0}^{t}R(t-s)J(s,A)ds.
\end{align}
Therefore,
\begin{align}\label{nice-1}
|n(t,A )| \leq (1 + \| R  \|_{L^1})\sup_{s\in [0,t]}|J(s,A)|.
\end{align}	
Moreover
\begin{align}\label{nice-2}
|J(t,A)|&\leq |M(t,A)| +\bQ(A) \Big|\int_0^t \int_\cX \Big(\int_0^{t-u}  f(s,x) ds\Big) \tilde{N}
(du,dx)\Big| \nonumber
\\
& \quad + \Big|\int_0^t \Big(\int_0^{t-s} F(w) dw
\Big) d M(s,A)\Big|.
\end{align}	
Our goal is to prove
\begin{align}\label{j-13-1}
	\frac{1}{T^2} E|n(T,A )|^2 \rightarrow 0
\end{align}	
when $T\rightarrow \infty$.

Using the Doob's inequality and \eqref{eq:Lem2-1} we get
\begin{align}\label{Doob}
\bE \Big(&\sup_{t \in [0,T]} |M(t,A)|^2\Big)  \leq 4 \bE |M(T,A)|^2 =4 \bE [M(\cdot,A)]_T = 4 \bE N(T,A) \nonumber \\& =4  \bQ(A)(Tm+m\int_{0}^{T}R(T-s)sds)
\leq
4  \bQ(A)m(1+\| R \|_{L^1} ) T,
\end{align}
where the equalities $4 \bE |M(T,A)|^2 =4 \bE [M(\cdot,A)]_T = 4 \bE N(T,A)$ are consequence of  Proposition I.45.c) in \cite{js1987}
since process $N(\cdot,A)$ is of finite variation and $N(0,A)=0$.


Now, using successively \eqref{nice-1}, \eqref{nice-2}, \eqref{Doob}, \eqref{sup1-predict}
and  \eqref{sup2-predict} we conclude that \eqref{j-13-1} is satisfied, which proves the lemma.
\end{proof}

\section{Functional LLN for $N$ and for Compound Marked Hawkes Process}

\subsection{Functional LLN for Process $N$}

In this section we assume that \eqref{Stab} and
\eqref{tF}	hold, and we use the convention that $\frac{0}{0}=0$, so that, for example, the expression $T^{-1} N(Tv,A)$ is defined for $T=0$.

The following theorem corresponds to Theorem 1 in \cite{BDHM}.  The theorem represents a particular instance of functional limit theorems for families of stochastic process. Here, we deal with a family of marked Hawkes processes parameterized by a non-negative real variable $v$. The essence of the theorem is that with the $T^{-1}$ averaging the family converges to a deterministic function of $v$. The parameter $v$ provides rescaling of the physical time, with $0\leq v<1$ modelling stretching the time (time dilation), for example hours to days,   and $v>1$ modeling accelerating the time, for example hours to milliseconds.

\begin{theorem}\label{LLN}
Let us fix $A\in \mathbfcal{X}$. The family of processes $\{(T^{-1} N(Tv,A))_{T\geq 0},\ v\geq 0\}$ (parameterized by  ${v\geq 0}$)
converges to the function $\nu(v)=vm\bQ(A)(\|R\|_{L^1}+1),\ v\geq 0,$ point-wise in $L^2(\bP)$ as  $T\rightarrow \infty$. That is, for each $v\geq 0$,
\[T^{-1} N(Tv,A)-\nu(v)\longrightarrow 0\]
as $T\rightarrow \infty$ in $L^2(\bP)$.
\end{theorem}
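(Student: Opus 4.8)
The plan is to combine the two limit results already in hand: the deterministic asymptotics for the mean, $T^{-1}\bE(N(Tv,A))\to vm\bQ(A)(\|R\|_{L^1}+1)$ from Lemma~\ref{lem:2}, and the $L^2$-vanishing of the centered process, $T^{-1}|N(T,A)-\bE(N(T,A))|\to 0$ from Lemma~\ref{lem:L2}. Writing
\[
T^{-1}N(Tv,A)-\nu(v)=\big(T^{-1}N(Tv,A)-T^{-1}\bE(N(Tv,A))\big)+\big(T^{-1}\bE(N(Tv,A))-\nu(v)\big),
\]
it suffices, for each fixed $v\geq 0$, to show each bracket tends to $0$ — the first in $L^2(\bP)$, the second deterministically — and then invoke the triangle inequality in $L^2(\bP)$.

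The second bracket is immediate from Lemma~\ref{lem:2}: for fixed $v$, apply \eqref{j-9-0} with the bounded interval $[0,v]$ (or just note that $\{v\}$ is bounded). The $v=0$ case is trivial under the stated convention $0/0=0$, since both $T^{-1}N(0,A)$ and $\nu(0)$ vanish, so assume $v>0$. For the first bracket I would reduce to Lemma~\ref{lem:L2} by a change of the time-scale parameter: set $S:=Tv$, so that $T^{-1}=vS^{-1}$ and
\[
T^{-1}\big(N(Tv,A)-\bE(N(Tv,A))\big)=v\,S^{-1}\big(N(S,A)-\bE(N(S,A))\big).
\]
As $T\to\infty$ with $v>0$ fixed we have $S\to\infty$, so by Lemma~\ref{lem:L2} the right-hand side converges to $0$ in $L^2(\bP)$ (multiplication by the fixed constant $v$ does not affect $L^2$-convergence). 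This handles the first bracket.

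The only genuinely delicate point is bookkeeping around the degenerate cases and the scaling: one must be careful that the convention $\frac00=0$ is used consistently so that $T^{-1}N(Tv,A)$ is well defined at $T=0$, and that the reduction $S=Tv$ is legitimate, which it is precisely because $v$ is held fixed and strictly positive (for $v=0$ the statement is vacuous). No new estimate is required beyond what Lemmas~\ref{lem:2} and~\ref{lem:L2} already provide; the substance of the theorem is entirely contained in those two lemmas, and this proof is the short assembly step. I expect this "main obstacle" to be essentially clerical, so the proof will be only a few lines.
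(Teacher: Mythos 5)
Your proposal is correct and follows essentially the same route as the paper: the same two-term decomposition into a centered part handled by Lemma~\ref{lem:L2} and a deterministic mean part handled by Lemma~\ref{lem:2}, with the triangle inequality in $L^2(\bP)$ in place of the paper's $(a+b)^2\leq 2a^2+2b^2$ bound. Your explicit rescaling $S=Tv$ just spells out a step the paper leaves implicit; no substantive difference.
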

\proof  The result is obvious for $v=0$ since $T^{-1}N(0,A)=0=\nu(0)$ for all $T>0$. For $v>0$ it is an immediate consequence of Lemma \ref{lem:2} and Lemma \ref{lem:L2}.
Indeed, we have
\begin{align*}
	&T^{-1} N(Tv,A)-\nu(v) =  \ T^{-1} N(Tv,A)-vm\bQ(A)(\|R\|_{L^1}+1) = \\ &T^{-1}( N(Tv,A) - \bE( N(Tv,A))) +\left (T^{-1}\bE( N(Tv,A))-vm\bQ(A)(\|R\|_{L^1}+1)\right ).
\end{align*}
	So,
\begin{align*}
&\bE\left (T^{-1} N(Tv,A)-vm\bQ(A)(\|R\|_{L^1}+1)\right )^2 \\
& \leq 2\bE\left (T^{-1} N(Tv,A) - \bE( N(Tv,A))\right )^2+2\left (T^{-1} \bE(N(Tv,A))-vm\bQ(A)(\|R\|_{L^1}+1)\right )^2.
\end{align*}
The first term on the right hand side of the above inequality converges to zero as $T\rightarrow \infty $ by Lemma \ref{lem:L2}, and the second term on the right hand side converges to zero as $T\rightarrow \infty $ by Lemma \ref{lem:2}.
\qed

	\begin{remark}\label{difference}
		Here we shed some light on the difference between Theorem \ref{LLN} and Theorem~1 in \cite{BDHM}.
		The marked Hawkes process with a finite set of marks considered in \cite{BDHM} is cast there in the format of an unmarked multivariate Hawkes process. An unmarked Hawkes process can be seen as a marked Hawkes process with $\mathcal{X}$ being a singleton, say $\mathcal{X}=\set{0}$. If in our set-up we take $\mathcal{X}=\set{0}$, then
		\[M(\cdot,\set{0})=\wt N(\cdot,\set{0}).\]
		Consequently, using \eqref{JTA} we obtain
		\[J(\cdot,\set{0})=M(\cdot,\set{0}),\]
		and in this case, invoking \eqref{nice-1}, we obtain
		\begin{align}\label{nice-1-sing}
			\sup_{t\in [0,T]}|n(t,\set{0} )| \leq (1 + \| R  \|_{L^1})\sup_{t\in [0,T]}|M(t,\set{0})|.
		\end{align}
		Hence and from \eqref{Doob}	 we have
			\begin{align}
			\bE \Big(&\sup_{t \in [0,T]} |n(t,\set{0})|^2\Big)  \leq CT
		\end{align}
		for a finite constant $C>0$.  This implies that
		\begin{equation}\label{eq:L2-V1}
			T^{-1} |N(Tv,\set{0})-\bE(N(Tv,\set{0}))|\longrightarrow 0
		\end{equation}
		as $T\rightarrow \infty$, in  $L^2(\bP)$ uniformly in $v$ on any bounded interval, which in turn, jointly   with Lemma \ref{lem:2}, implies that
		\[T^{-1} N(Tv,\set{0})-vm(\|R\|_{L^1}+1)\longrightarrow 0\]
		as $T\rightarrow \infty$ in $L^2(\bP)$ uniformly in $v$ on any bounded interval, which agrees with what Theorem~1 in \cite{BDHM} states. However, since the mark space $(\mathcal{X},\mathbfcal{X} )$ considered in this paper is a general one, the results of Theorem 1 in \cite{BDHM}  do not apply here, and thus Theorem 4.1 can not be cast into the framework considered in \cite{BDHM}.	\end{remark}

\subsection{Functional LLN for Process $C^A$}
Recall that for $A\in \mathbfcal{X}$ and a sequence $Z^A_n,\ n=1,2,\ldots$,  of real valued random variables the process $C^A$ is defined as
\begin{equation*}
  C^A_t=\sum_{n=1}^{N(t,A)}\, Z^A_n =\int_{0}^{t} \zeta^A(s)N(ds,A),\quad t\geq 0,
\end{equation*}
where  $\zeta^A(s)=Z^A_n$ on $(T_{n-1}, T_n]$.
Assume that $Z^A_n,\ n=1,2,\ldots$ is an i.i.d. sequence of random variables with finite first moment and independent from $N$.
We then have the following
	\begin{theorem}\label{LLN-CA}
The family of processes $\{(T^{-1}C^A_{Tv})_{T\geq 0},\ v\geq 0\}$
(parameterized by  ${v\geq 0}$) converges to the function $\upsilon(v)=vm\bQ(A)(\|R\|_{L^1}+1)\bE(Z^A_1),\ v\geq 0,$ point-wise in probability as  $T\rightarrow \infty$. That is, for each $v\geq 0$, we have
\[\frac{C^A_{Tv}}{T}\longrightarrow \upsilon(v)\]
as $T\rightarrow \infty$, in probability.
\end{theorem}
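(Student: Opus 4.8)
The plan is to reduce the statement about $C^A_{Tv}$ to Theorem \ref{LLN} (the FLLN for $N(\cdot,A)$) plus a classical strong law of large numbers for the i.i.d.\ sequence $Z^A_n$, exploiting the independence of $(Z^A_n)$ from $N$. First I would write, for fixed $v>0$ (the case $v=0$ being trivial since $C^A_0=0$),
\[
\frac{C^A_{Tv}}{T}=\frac{1}{T}\sum_{n=1}^{N(Tv,A)}Z^A_n
=\frac{N(Tv,A)}{T}\cdot\frac{1}{N(Tv,A)}\sum_{n=1}^{N(Tv,A)}Z^A_n,
\]
with the convention that the second factor is $\bE(Z^A_1)$ (or simply $0$) on the event $\{N(Tv,A)=0\}$. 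By Theorem \ref{LLN}, the first factor converges to $vm\bQ(A)(\|R\|_{L^1}+1)$ in $L^2(\bP)$, hence in probability. So it suffices to show the second factor converges to $\bE(Z^A_1)$ in probability.

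For the second factor I would argue as follows. Since $\|F\|_{L^1}<1$ and \eqref{tF} hold, Lemma \ref{lem:1} and \eqref{R1} give $\bE(N(Tv,A))\to\infty$; combined with the $L^2$ convergence in Theorem \ref{LLN} (which forces $T^{-1}N(Tv,A)\to vm\bQ(A)(\|R\|_{L^1}+1)>0$ in probability), we get $N(Tv,A)\to\infty$ in probability as $T\to\infty$. Now let $S_k:=\sum_{n=1}^k Z^A_n$; by Kolmogorov's SLLN, $S_k/k\to\bE(Z^A_1)$ a.s., so $\{\omega: S_k(\omega)/k\to\bE(Z^A_1)\}$ has probability one. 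Because $(Z^A_n)_{n\ge1}$ is independent of $N$, one can condition on $N$: for each fixed realization of the counting process, $N(Tv,A)$ becomes a deterministic (in the realization) integer $k_T\to\infty$, and $S_{k_T}/k_T\to\bE(Z^A_1)$ along that realization by the SLLN event. Making this precise, I would use the standard fact that if $K_T\to\infty$ in probability and $K_T$ is independent of the sequence $(S_k/k)$ that converges a.s.\ to a constant $c$, then $S_{K_T}/K_T\to c$ in probability; this follows by a routine $\varepsilon$--$\delta$ argument (choose $k_0$ so that $\sup_{k\ge k_0}|S_k/k-c|<\varepsilon$ with high probability, then use $\bP(K_T<k_0)\to0$ and independence).

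Finally I would combine the two pieces: writing $\frac{C^A_{Tv}}{T}=\frac{N(Tv,A)}{T}\cdot\frac{S_{N(Tv,A)}}{N(Tv,A)}$, the first factor converges in probability to $vm\bQ(A)(\|R\|_{L^1}+1)$ and the second to $\bE(Z^A_1)$, so by Slutsky-type reasoning the product converges in probability to $\upsilon(v)=vm\bQ(A)(\|R\|_{L^1}+1)\bE(Z^A_1)$, which is the claim. The main obstacle I anticipate is handling the random index $N(Tv,A)$ rigorously: one must carefully invoke the independence of $(Z^A_n)$ from $N$ to pass from the a.s.\ convergence of $S_k/k$ along deterministic subsequences to convergence in probability of $S_{N(Tv,A)}/N(Tv,A)$, and one must deal cleanly with the event $\{N(Tv,A)=0\}$ via the $\tfrac{0}{0}=0$ convention. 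Everything else is a direct appeal to Theorem \ref{LLN} and the classical strong law.
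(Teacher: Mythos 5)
Your proposal is correct and follows essentially the same route as the paper: the same factorization $\frac{C^A_{Tv}}{T}=\frac{N(Tv,A)}{T}\cdot\frac{1}{N(Tv,A)}\sum_{n=1}^{N(Tv,A)}Z^A_n$, convergence of the first factor via Theorem \ref{LLN}, and a random-index strong law for the second factor (the paper cites Theorem 2.2 in \cite{Gut2009} for the step you sketch by hand, a step which in fact requires no independence, only $N(Tv,A)\to\infty$ in probability and the a.s.\ convergence of $S_k/k$). The concluding Slutsky/continuous-mapping step is identical.
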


\proof
The result is obvious for $v=0$ (c.f. the proof of Theorem 4.1). Thus, we take $v>0$ in the rest of the proof.
We have
\[\frac{C^A_{Tv}}{T}=\frac{\sum_{n=1}^{N(Tv,A)}\, Z^A_n}{N(Tv,A)}\frac{N(Tv,A)}{T}.\]
It follows from Theorem \ref{LLN} that
\[N(Tv,A)\longrightarrow \infty\]
in probability, when $T\rightarrow \infty$.
Consequently, combining this with the strong law of large numbers for the sequence $Z^A_n,\ n=1,2,\ldots$, and using  Theorem 2.2 in \cite{Gut2009} we conclude that
\[\frac{\sum_{n=1}^{N(Tv,A)}\, Z^A_n}{N(Tv,A)}\longrightarrow \bE(Z^A_1)\]
as $T\rightarrow \infty$, in probability.
 It also follows from Theorem \ref{LLN} that
\[\frac{N(Tv,A)}{T}\longrightarrow vm\bQ(A)(\|R\|_{L^1}+1)\]
in probability as $T\rightarrow \infty$.
 Thus, using the above and applying the continuous mapping theorem we conclude that
\[\frac{C^A_{Tv}}{T}\longrightarrow vm\bQ(A)(\|R\|_{L^1}+1)\bE(Z^A_1)\]
in probability, as $T\rightarrow \infty$.\qed 

\begin{example}[Application in insurance]
  As it has been observed, clustering and self-exciting arrivals of claims is an inherent feature of claims arrival processes that is faced by insurance companies. See e.g. \cite{DasZha2012} or \cite{Magnusson2015ARS}.  Thus, modeling claims arrivals in terms of a marked Hawkes process is fully justified and supported by empirical evidence.

Consider an insurance company. Let $N$ be a marked Hawkes process that models the claims arrivals.  We take discrete mark space, say $\mathcal{X}=\set{x_k,\, k=1,2,\ldots,K}$, where $x_k$s represent type/severity of arriving claims.

Now, let $Z^{(k)}_n,\ n=1,2,\ldots$, be i.i.d. random variables, independent of $N$, representing the claim sizes triggered by the claim of type/severity $x_k$.  A classical problem in insurance is the problem of long-time behavior of the surplus process.

The surplus process, say $\mathcal{R}$, is  given here as the following extension of the classical Cramer-Lundberg model  and termed a compound marked Hawkes process:
\begin{equation}
\mathcal{R}_t=r+c t -\sum_{k=1}^{K}\sum_{n=1}^{N((0,t],\{x_k\})}\, Z^{(k)}_n,
\end{equation}
where $r$ is the initial capital and $c$ is the insurance premium rate. This model generalizes the compound Hawkes surplus model originally introduced in \cite{StaTor2010}.
Now, let us set $\mu^k=\bE(Z^{(k)}_1)$ for $k=1,2,\ldots,K.$ Then, Theorem \ref{LLN-CA} gives us that  for $v\geq 0$
\begin{equation}\label{ruin}
\frac{\mathcal{R}_{Tv}}{T}\longrightarrow v\left (c- m(\|R\|_{L^1}+1)\sum_{k=1}^{K}\mu^k\bQ(\{x_k\})\right )
\end{equation}
as $T\rightarrow \infty$, in probability.
\end{example}
\begin{remark}
Consider a special case of the above example where the claim sizes do not depend on the mark. Thus, $Z^{(k)}_n=Z_n=Z^{\mathcal{X}}_n$ for $k=1,\ldots,K$, $n=1,2,\ldots,$ where $Z_n,\ n=1,2,\ldots$ is a  sequence of  i.i.d. random variables. So, here we have
\begin{equation}
\mathcal{R}_t=r+c t -\sum_{k=1}^{K}\sum_{n=1}^{N((0,t],\{x_k\})}\, Z_n=r+c t -\sum_{n=1}^{N(t)}\, Z_n=r+ct -C_t.
\end{equation}
Let $\mu:=\bE(Z_1)=\bE(Z^{\mathcal{X}})$. Accordingly, using Proposition \ref{LLN-CA} and taking $v=1$ we obtain the following special case of the LLN for the ruin process
\begin{equation}\label{ruin-special}
\frac{\mathcal{R}_{T}}{T}\longrightarrow c- \mu m(\|R\|_{L^1}+1)
\end{equation}
as $T\rightarrow \infty$, in probability.

We will now show that this agrees with the classical LLN for a ruin process driven by a compound marked Hawkes process, as presented in \cite{StaTor2010}, \cite{Swishchuk2018} or \cite{BJNS2024}, for example.
Towards this end we set (assuming integrability)
\[H(x)=\int_{0}^{\infty}f(t,x)dt,\quad  x\in \mathcal{X}.\]
Thus, using Fubini's theorem,
\[{\mathbb{E}H(X_1)=\int_{\mathcal{X}}H(x)\mathbb{Q}(dx)=\int_{\mathcal{X}}\left [\int_{0}^{\infty}f(t,x)dt\right ]\mathbb{Q}(dx)}\]
\[{=\int_{0}^{\infty}\left [\int_{\mathcal{X}}f(t,x)\mathbb{Q}(dx)\right ]dt=\int_{0}^{\infty}F(t)dt=||F||_{L^1}.}\]
The classical stability condition for marked Hawkes processes is that $\mathbb{E}H(X_1)<1$ which, in view of the above, is the same as \eqref{Stab}. Finally, also using \eqref{R1} and \eqref{ruin-special}, we get that
\begin{align}\label{ruin-special-1}
\frac{\mathcal{R}_{T}}{T}\longrightarrow \ & c- \mu m(\|R\|_{L^1}+1)=c- \frac{\mu m}{1-||F||_{L^1}}= c- \frac{\bE(Z_1) m}{1-\mathbb{E}H(X_1)}
\end{align}
as $T\rightarrow \infty$, which is a version of the classical LLN for a ruin process driven by a compound marked  Hawkes process.

An important condition for insurance risk management is the so called net-profit condition (c.f. \cite{Rolski1999}). In our set-up the net-profit condition takes the form
\[
	 c> \frac{\bE(Z_1) m}{1-\mathbb{E}H(X_1)}.
\]
Recall that if the net profit condition is violated then ruin is inevitable.  Taking $f(\cdot,\cdot) \equiv 0$ and $m>0$ yields the classical Crammer-Lundberg model.  In this case  $\mathbb{E}H(X_1)=0$ and we recover
the classical net-profit condition
\[
c> \bE(Z_1) m.
\]
Since $\frac{\bE(Z_1) m}{1-\mathbb{E}H(X_1)}> \bE(Z_1) m$ we see that the net-profit condition is more restrictive in  the presence of the self-excitation effect in arrival of claims. This means that if the insurer recognizes  the change of the structure of claims arrivals (from Poisson model to Hawkes model) then in order to prevent inevitability of ruin they should increase the premium rate $c$ above $\frac{\bE(Z_1) m}{1-\mathbb{E}H(X_1)}$ if needed.   \qed
\end{remark}

\subsection{Functional LLN for Process $D^\varphi$}
Recall that for  a real-valued, bounded Borel function  $\varphi(x) $ the process $D^\varphi$ is defined as
\begin{equation*}
  D^\varphi_t=\int_{(0,t]}\int_{\mathcal{X}} \varphi(x)N(ds,dx)=\sum_{n=1}^{N(t)}\varphi(X_n),\quad t\geq 0.
\end{equation*}

We begin by observing  that from \eqref{j-9-1} and Lemma \ref{lem:1} it follows that
\[\bE (D^\varphi_t) = \bE  \left(\sum_{n=1}^{N(t)}\varphi(X_n)\right ) = \varphi_\bQ \bE \left (\int_{(0,t)}\lambda (s)ds\right )=
		\varphi_\bQ m  \left(t+\int_{0}^{t}R(t-s)sds \right ),\]
where $\varphi_\bQ:=\int_{\mathcal{X}}\varphi(x)\bQ(dx).$

 \begin{theorem}\label{Dvarphi} Assume that the following integrability condition is satisfied
\begin{equation}\label{assume}
\int_{\mathcal{X}}\left [\int_{0}^{\infty}\left(f(u,x)+\int_{0}^{u} R(u-r)f(r,x)dr\right )du\right ]^2\bQ(dx)<\infty.
\end{equation}
 Then, the family of processes $\{(T^{-1}D^\varphi_{vT})_{T\geq 0},\ v\geq 0\}$ (parameterized by $v\geq 0$)
 converges to the function $\psi(v)=\varphi_\bQ vm (\|R\|_{L^1}+1),\ v\geq 0,$ point-wise in probability as  $T\rightarrow \infty$. That is,
\begin{equation}\label{j-17-1}
	\frac{D^\varphi_{vT}}{T}\longrightarrow \psi(v)
\end{equation}
as $T\rightarrow \infty$ in probability, for any $v\geq 0.$
\end{theorem}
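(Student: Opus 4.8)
The plan is to reduce everything to Theorem~\ref{LLN} (applied with $A=\mathcal{X}$) together with a random-index law of large numbers. For $v=0$ the statement is trivial since $T^{-1}D^\varphi_0=0=\psi(0)$, so I would fix $v>0$. Because $T_\infty=\infty$ almost surely, $N(t)\to\infty$ a.s.\ as $t\to\infty$, so for all large $T$ the event $\{N(vT)\ge 1\}$ has full probability, and on it one may write
\[
\frac{D^\varphi_{vT}}{T}=\left(\frac{1}{N(vT)}\sum_{n=1}^{N(vT)}\varphi(X_n)\right)\cdot\frac{N(vT)}{T},
\]
so it suffices to analyse the two factors separately.

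For the first factor I would invoke Proposition~2.7 in \cite{Bielecki2022a}: the marks $X_1,X_2,\dots$ form an i.i.d.\ sequence with law $\bQ$, hence, since $\varphi$ is bounded and Borel, the variables $\varphi(X_n)$ are i.i.d.\ with finite mean $\varphi_\bQ$, so $\frac1n\sum_{k=1}^n\varphi(X_k)\to\varphi_\bQ$ almost surely by the strong law of large numbers. Combining this with $N(vT)\to\infty$ (a.s., in particular in probability) and Theorem~2.2 in \cite{Gut2009} gives $\frac{1}{N(vT)}\sum_{n=1}^{N(vT)}\varphi(X_n)\to\varphi_\bQ$ in probability. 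For the second factor, Theorem~\ref{LLN} with $A=\mathcal{X}$ (so $\bQ(A)=1$) yields $\frac{N(vT)}{T}\to vm(\|R\|_{L^1}+1)$ in $L^2(\bP)$, hence in probability. Applying the continuous mapping theorem to the product of the two factors then gives \eqref{j-17-1}.

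A second, self-contained route --- and the one that actually consumes hypothesis~\eqref{assume} --- is to copy the structure of the proof of Lemma~\ref{lem:L2}. With $\widetilde{N}(ds,dx)=N(ds,dx)-\bQ(dx)\lambda(s)ds$ one writes $D^\varphi_t=\widetilde M^\varphi_t+\varphi_\bQ\int_0^t\lambda(s)ds$, where $\widetilde M^\varphi_t:=\int_0^t\int_{\mathcal{X}}\varphi(x)\widetilde{N}(ds,dx)$ is a square-integrable martingale; expanding $\lambda$, substituting $N=\widetilde{N}+\bQ\lambda\,ds$, using the integration-by-parts trick of \eqref{eq:ibp-trick} and then the basic renewal theorem, one arrives (with $d^\varphi(t):=D^\varphi_t-\bE(D^\varphi_t)$) at
\[
d^\varphi(t)=\widetilde M^\varphi_t+\varphi_\bQ\Big(\widetilde K(t)+\int_0^t R(t-s)\widetilde K(s)\,ds\Big),\qquad \widetilde K(t):=\int_0^t\int_{\mathcal{X}}\Big(\int_0^{t-u}f(w,x)\,dw\Big)\widetilde{N}(du,dx).
\]
A stochastic Fubini step rewrites the parenthesis as a single stochastic integral against $\widetilde{N}$ whose deterministic kernel is dominated (using $f,R\ge 0$) by $\int_0^\infty\big(f(w,x)+\int_0^w R(w-r)f(r,x)\,dr\big)dw$, so by \eqref{Elambda-sol} its second moment is at most a constant times $t$, the constant being finite precisely by \eqref{assume}; since also $\bE([\widetilde M^\varphi]_t)=\big(\int_{\mathcal{X}}\varphi(x)^2\bQ(dx)\big)\bE\int_0^t\lambda(s)ds=O(t)$, one obtains $T^{-2}\bE|d^\varphi(vT)|^2\to 0$, and together with $T^{-1}\bE(D^\varphi_{vT})=\varphi_\bQ T^{-1}\bE\int_0^{vT}\lambda(s)ds\to\psi(v)$ (from Lemma~\ref{lem:2}, cf.\ \eqref{Lem2-lambda}) this gives \eqref{j-17-1}, in fact with convergence in $L^2(\bP)$.

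The first route is essentially bookkeeping; its only delicate points are verifying the hypotheses of the random-index law of large numbers and disposing of the event $\{N(vT)=0\}$. In the second route the main obstacle is the stochastic Fubini manipulation producing the single-kernel representation of $\widetilde K(t)+\int_0^t R(t-s)\widetilde K(s)\,ds$, together with the pointwise bound on its kernel --- this is exactly where \eqref{assume} is used, being the integrability condition that makes the relevant stochastic integral square-integrable.
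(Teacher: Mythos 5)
Your second route is essentially the paper's own proof. The paper likewise splits $D^\varphi_t=\int_0^t\int_{\mathcal X}\varphi(x)\,\widetilde N(ds,dx)+\varphi_\bQ\int_0^t\lambda(s)\,ds$, disposes of the martingale term by Doob's inequality and \eqref{lambda1}, and then controls $\int_0^{Tv}\lambda(s)ds-\bE\int_0^{Tv}\lambda(s)ds$ by exhibiting it as a stochastic integral whose deterministic kernel is exactly $f(u,x)+\int_0^uR(u-r)f(r,x)\,dr$; the only real difference is that the paper performs this manipulation on $\lambda$ itself through the thinning representation against the compensated Poisson measure $\widetilde{\mathcal N}$, whereas you work with the compensated jump measure $\widetilde N$ and with $\ell(t)=\int_0^t\lambda(s)ds$ directly. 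Your identity $\ell-\bE\ell=\widetilde K+R\ast\widetilde K$ is correct (it follows from $\ell(t)=mt+\widetilde K(t)+F\ast\ell(t)$ and the renewal theorem), and the domination of the resulting kernel by $(1+\|R\|_{L^1})\int_0^\infty f(w,x)dw$ is precisely where \eqref{assume} enters, as in \eqref{E-lambda-Elambda}; so this route is sound and, as you note, even gives $L^2(\bP)$ convergence. Your first route is genuinely different from what the paper does: it transplants the proof of Theorem \ref{LLN-CA}. The paper deliberately avoids this and its closing remark stresses that $\varphi(X_n)$ is \emph{not} independent of $N$, but that objection does not actually bite: Gut's Theorem 2.2 needs only $\tfrac1n\sum_{k\le n}\varphi(X_k)\to\varphi_\bQ$ almost surely (true by the SLLN, the $X_k$ being i.i.d.\ with law $\bQ$) and $N(vT)\to\infty$ in probability, with no independence between the summands and the random index. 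Two small repairs there: the inference ``$T_\infty=\infty$ a.s.\ hence $N(t)\to\infty$ a.s.'' is backwards (non-explosion does not produce infinitely many points); the correct justification is the one you also supply, namely $N(vT)/T\to vm(\|R\|_{L^1}+1)$ in probability from Theorem \ref{LLN}, which forces $N(vT)\to\infty$ in probability when $m>0$, while for $m=0$ the statement is vacuous since $\lambda\equiv0$ and $D^\varphi\equiv0=\psi$. The net comparison: route one proves \eqref{j-17-1} \emph{without} hypothesis \eqref{assume}, which therefore appears to be an artifact of the $L^2$ method rather than intrinsic to the result; route two (and the paper's proof) buys the stronger mode of convergence at the cost of that hypothesis.
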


\proof
 To begin we write $D^\varphi$ using $\widetilde{N}$ (see \eqref{j-9-2})
\[D^\varphi_t=\int_{0}^{t}\int_{\mathcal{X}}\varphi(x) \widetilde{N}(ds,dx)+\varphi_\bQ\int_{0}^{t}\lambda(s)ds.\]
Using the Doob's maximal inequality and \eqref{lambda1} we get for any $V\in (0,\infty)$
\[\bE\left (\sup_{v\in [0,V]}\left (T^{-1}\int_{0}^{vT}\int_{\mathcal{X}}\varphi(x) \widetilde{N}(ds,dx)\right )^2\right )\leq \frac{C}{T} \int_{0}^{V}\bE(\lambda (Ts))ds\int_{\mathcal{X}}\varphi^2 (x)\bQ(dx)\leq \frac{C'}{T},\]
where $C$ and $C'$ are constants not depending on $T$. Thus, $\sup_{v\in [0,V]}T^{-1}\int_{0}^{vT}\int_{\mathcal{X}}\varphi(x) \widetilde{N}(ds,dx)$ converges to zero in probability as $T \rightarrow \infty.$
Thus, to prove \eqref{j-17-1} it is enough to prove
\begin{equation}\label{j-17-2}
\frac{\varphi_\bQ \int_{0}^{Tv}\lambda(s)ds}{T}\longrightarrow \psi(v)	
\end{equation}
in probability as $T\rightarrow \infty$ (for any $v\geq 0$).
We start with showing that
\begin{equation}\label{j-17-3}
	T^{-1}\left |\int_{0}^{Tv}\lambda(s)ds-\int_{0}^{Tv}\bE(\lambda(s))ds\right | \longrightarrow 0
	\end{equation}
 in probability for any $v\geq 0.$  Towards this end we proceed as follows. Denote $\wt {\mathcal{N}}(ds,dx,dz)=\mathcal{N}(ds,dx,dz)-ds\bQ(dx)dz$, which is a (local) martingale measure.
Using the representation  \eqref{thinning-lambda} of $\lambda$ we obtain
\begin{equation}\label{thinning-lambda-tilde}
\lambda(t)=F\ast \lambda(t)+ m+\int_{0}^{t}\int_{\mathcal{X}}\int_{0}^{\lambda(s-)}f(t-s,x)\wt {\mathcal{N}}(ds,dx,dz) .
\end{equation}
and thus
\begin{equation*}\label{thinning-lambda-tilde-ast}
R\ast\lambda(t)=R\ast (F\ast \lambda)(t)+ R\ast m(t)+\int_{0}^{t}R(t-s)\left (\int_{0}^{s}\int_{\mathcal{X}}\int_{0}^{\lambda(r-)}f(s-r,x)\wt {\mathcal{N}}(dr,dx,dz)\right )ds.
\end{equation*}
From \eqref{F-splot}  we have that $R$ satisfies $R = F + R \ast F $ and thus  $(F+R\ast F)\ast \lambda(t)=R\ast \lambda(t)$, so the above gives
\begin{equation*}\label{thinning-lambda-tilde-ast-1}
F\ast\lambda(t)= R\ast m(t)+\int_{0}^{t}R(t-s)\left (\int_{0}^{s}\int_{\mathcal{X}}\int_{0}^{\lambda(r-)}f(s-r,x)\wt {\mathcal{N}}(dr,dx,dz)\right )ds.
\end{equation*}
Thus, from this and from \eqref{thinning-lambda-tilde} we obtain
\begin{align}\label{thinning-lambda-tilde-ast}
\lambda(t)&=m+ R\ast m(t)+\int_{0}^{t}\int_{\mathcal{X}}\int_{0}^{\lambda(s-)}f(t-s,x)\wt {\mathcal{N}}(ds,dx,dz) \nonumber \\ &+\int_{0}^{t}R(t-s)\left (\int_{0}^{s}\int_{\mathcal{X}}\int_{0}^{\lambda(r-)}f(s-r,x)\wt {\mathcal{N}}(dr,dx,dz)\right )ds \nonumber \\
&=m+ R\ast m(t)+\int_{0}^{t}\int_{\mathcal{X}}\int_{0}^{\lambda(s-)}f(t-s,x)\wt {\mathcal{N}}(ds,dx,dz) \nonumber \\
&+\int_{0}^{t}\int_{\mathcal{X}}\int_{0}^{\lambda(s-)}\int_{0}^{t-s}R(t-s-r)f(r,x)dr\wt {\mathcal{N}}(ds,dx,dz)\nonumber \\
&=m+ R\ast m(t)\nonumber \\
&+\int_{0}^{t}\int_{\mathcal{X}}\int_{0}^{\lambda(s-)}\left(f(t-s,x)+\int_{0}^{t-s} R(t-s-r)f(r,x)dr\right )\wt {\mathcal{N}}(ds,dx,dz),
\end{align}
where the second equality follows by applying Fubini's theorem to \[\int_{0}^{t}R(t-s)\left (\int_{0}^{s}\int_{\mathcal{X}}\int_{0}^{\lambda(r-)}f(s-r,x)\wt {\mathcal{N}}(dr,dx,dz)\right )ds.\]
Now, from \eqref{thinning-lambda-tilde-ast} and \eqref{Elambda-sol} it follows that
\begin{align}\label{lambda-Elambda}
&\int_{0}^{Tv}\lambda(t)dt-\int_{0}^{Tv}\bE(\lambda(t))dt \nonumber \\ &=\int_{0}^{Tv}\int_{0}^{t}\int_{\mathcal{X}}\int_{0}^{\lambda(s-)}\left(f(t-s,x)+\int_{0}^{t-s} R(t-s-r)f(r,x)dr\right )\wt {\mathcal{N}}(ds,dx,dz)dt \nonumber \\
&=\int_{0}^{v}\int_{\mathcal{X}}\int_{0}^{\lambda(Ts-)}\left [\int_{0}^{T(v-s)}\left(f(u,x)+\int_{0}^{u} R(u-r)f(r,x)dr\right )du\right ]\wt {\mathcal{N}}(Tds,dx,dz).
\end{align}
Given the above we have
\begin{align}\label{E-lambda-Elambda}
&\bE \left (\int_{0}^{Tv}\lambda(t)dt-\int_{0}^{Tv}\bE(\lambda(t))dt \right )^2 \nonumber
\\
&
\leq \int_{0}^{v}\bE(\lambda(Ts))ds\int_{\mathcal{X}}\left [\int_{0}^{\infty}\left(f(u,x)+\int_{0}^{u} R(u-r)f(r,x)dr\right )du\right ]^2\bQ(dx).
\end{align}
Thus, using \eqref{lambda1}  and  \eqref{assume} we conclude that
\[\lim_{T\rightarrow \infty }\frac{1}{T}\bE\left (\int_{0}^{Tv}\lambda(t)dt-\int_{0}^{Tv}\bE(\lambda(t))dt\right )^2 =0,\]
which gives \eqref{j-17-3}. Since
\begin{align} \label{j-18-2}
	\bE\left (\int_0^t \lambda(s)ds\right )=m\left (t+\int_{0}^{t}sR(t-s)ds\right )
\end{align}
(see Lemma \ref{lem:1}) and
\begin{align*}
	\int_{0}^{Tv}sR(Tv-s)ds = - \int_{0}^{Tv}(Tv-s)R(Tv-s)ds + \int_{0}^{Tv}TvR(Tv-s)ds,
\end{align*}
using \eqref{j-10-1} (which is a consequence of assumption \eqref{tF})
we obtain that
\begin{align} \label{j-18-1}
	\int_{0}^{Tv}sR(Tv-s)ds \longrightarrow \psi(v)
\end{align}
as $T\to \infty$.
Finally, from \eqref{j-17-3}, \eqref{j-18-2}, \eqref{j-18-1} we obtain \eqref{j-17-2} and we conclude the proof.

\begin{remark}
The distinctive difference between Theorem \ref{LLN-CA} and Theorem \ref{Dvarphi} is that in the former one it was assumed that the sequence $Z^A_n$, $n=1,2,...$ is independent of the process $N$, whereas in case of the latter one the sequence $\varphi(X_n)$, $n=1,2,...$ and the process $N$ are not independent.
\end{remark}

\section{Future research}

The functional laws of large numbers presented here do not give uniform convergence. Please see Remark \ref{R1} in this regard. We plan to study the uniform convergence in a follow-up work.

As stated above, our Theorem \ref{LLN} relates to Theorem 1 in \cite{BDHM}. In the latter theorem the authors also consider the almost sure convergence, the case that is not studied in the present paper. We plan to study this mode of convergence for our marked processes in a follow-up work as well.

Finally, we plan to complement the present results with functional central limit theorems for marked Hawkes processes and marked compound Hawkes processes with a general mark space.

\section{Acknowledgement}   The authors thank to BIRS, Banff, AB, Canada, for providing very friendly and excellent environment for our fruitful and productive work  during the workshop "Applications of Multivariate Hawkes Processes in Finance, Insurance and Epidemiology" which have resulted in several papers, including this one. The authors also thank the University of Warsaw which supported this research via grant IDUB-POB3-D110-003/2022. A. Swishchuk thanks to NSERC for continuing support. \\ The authors would also like to thank the two anonymous referees for very careful reading of the manuscript and their comments and remarks that allowed us to improve the presentation of the results.

				\bibliographystyle{alpha}
				\bibliography{Math_Fin,Math_Fin_1,Hawkes}

\end{document}